\newcommand{\todo}[1]{\ifthenelse{\isodd{\thepage}}{\normalmarginpar}
{\reversemarginpar}\marginpar{\fbox{\parbox{24mm}{\sloppy\footnotesize #1}}}}
\newcommand{\Aut}{\operatorname{Aut}}
\newcommand{\Hom}{\operatorname{Hom}}
\newcommand{\End}{\operatorname{End}}
\newcommand{\Ad}{\operatorname{Ad}}
\newcommand{\rk}{\operatorname{rk}}
\newcommand{\Pic}{\operatorname{Pic}}
\newcommand{\cc}{\mathbb{C}}
\newcommand{\rr}{\mathbb{R}}
\newcommand{\zz}{\mathbb{Z}}
\newcommand{\pp}{\mathbb{P}}
\newcommand{\os}{\mathcal{O}}
\newcommand{\ox}{\mathcal{O}_X}
\newcommand{\E}{\mathcal{E}}
\newcommand{\Prb}{\mathcal{P}}
\newcommand{\xtilde}{\widetilde{X}}
\newcommand{\oxtilde}{\mathcal{O}_{\widetilde{X}}}
\theoremstyle{plain}
\newtheorem{definition}{Definition}[section]
\newtheorem{theorem}[definition]{Theorem} 
\newtheorem{proposition}[definition]{Proposition} 
\newtheorem{lemma}[definition]{Lemma}
\newtheorem{corollary}[definition]{Corollary}
\theoremstyle{definition}
\newtheorem{exa}[definition]{Example} 
\newenvironment{example}{\begin{exa}}{\parbox{2mm}{\hfill}\hfill $\triangle$\end{exa}}
\newtheorem{rema}[definition]{Remark} 
\newenvironment{remark}{\begin{rema}}{\parbox{2mm}{\hfill}\hfill $\triangle$\end{rema}}
\begin{document}
 
\bigskip\bigskip

\title{Restricting Higgs bundles to curves}
\bigskip
\date{\today}
\subjclass[2010]{14H60, 14J60} \keywords{Semistable Higgs bundles, restriction to curves, Bogomolov inequality, numerically effective tangent bundle, Calabi-Yau manifolds}
\thanks{Support for this work was partly provided by {\sc prin} ``Geometry of Algebraic Varieties'' and  {\sc gnsaga-indam}. The second author is supported by the {\sc fapesp} post-doctoral grant number 2013/20617-2.   The first author is a member of the {\sc vbac} group. 
}

 \maketitle \thispagestyle{empty} \vspace{-3mm}
\begin{center}{\sc Ugo Bruzzo}$^{\P\S}$  {and} {\sc Alessio Lo Giudice}$^\ddag$ \\[5pt]
$^\P$ Scuola Internazionale Superiore di Studi Avanzati {\sc (sissa)},\\ Via  Bonomea 265, 34136
Trieste, Italia; \\[3pt] $^\S$ Istituto Nazionale di Fisica Nucleare, Sezione di Trieste \\[5pt] 
$^\ddag$ IMECC - UNICAMP,
Department of Mathematics, \\
Rua S{\'e}rgio Buarque de Holanda 651, 
Bar\~ao Geraldo, Campinas, \\ SP - Brazil
CEP 13083-859 \\[5pt] 
{\footnotesize E-mail: {\tt ugo.bruzzo@sissa.it, alessiologiudic@gmail.com} }
\end{center}

\vfill

\begin{abstract}  
We determine some classes of varieties $X$ --- that include the varieties with numerically effective tangent bundle --- satisfying the following
property: if  $\E=(E,\phi)$  is a Higgs bundle such that  $f^\ast\E$ is semistable for any morphism $f\colon C\to X$, where $C$ is a smooth projective curve, then $E$ is slope semistable and $2r c_2(E)-(r-1) c_1^2(E)=0$ in $H^4(X,\rr)$.
We also characterize some classes of varieties such that  the underlying vector bundle of a slope semistable Higgs bundle is always slope semistable.  
\end{abstract}

\maketitle

\section{Introduction}

The Chern classes of a slope semistable vector bundle $E$ on
 a polarized smooth projective variety $(X,H)$ of dimension $n$ satisfy the numerical inequality  $\Delta(E)\cdot H^{n-2}\geq0$, where the
 discriminant $\Delta(E)$ is the characteristic class
 $$\Delta(E) =c_2(E) - \frac{r-1}{2r} c_1(E)^2 \in H^4(X,\rr);$$
 here $r$ is the rank of $E$. This is called the Bogomolov inequality \cite{HL}. It may be interesting to classify the semistable bundles that satisfy the stronger equality $\Delta(E)=0$. It was proved in \cite{BHR} (see also \cite{N,BB1,BB2}) that 
 these   bundles are those whose normalized tautological divisor is numerically effective, and this condition turns out to be equivalent to the fact that for any morphism $f\colon C\to X$ from a smooth projective curve, the pullback $f^*E$ is   semistable. This critically uses the result in \cite{DPS} that numerically flat bundles have vanishing Chern classes.  In \cite{BO2} the authors proved that the pullback $f^\ast\E$ of a semistable Higgs bundle $\E=(E,\phi)$ with vanishing discriminant is semistable for any morphism $f\colon C\to X$. The other implication --- i.e.,
 if $f^\ast\E$ is semistable for all morphism $f\colon C\to X$ then $\E$ is semistable and  $\Delta(E)=0$ --- was   conjectured,  (see also \cite{SimpsonV}), however, counterexamples have been found \cite{BBG}. Here we give a proof
 of this fact for some special classes of varieties $X$.
 
 This result relies on two facts: first, for some varieties (basically, those whose tangent bundle is numerically effective), a Higgs bundle is slope semistable if and only if the underlying vector bundle is slope semistable; and then, building on this, the result is extended to a wider class of varieties by using such constructions as \'etale coverings, fibrations and the like.

The structure of this paper is as follows. In section \ref{SemistableHiggs} we give the basic definitions and main properties of $\mu$-semistable (Higgs) bundles over projective varieties. Then we generalize (Theorem \ref{TeoHs}) a result of Nitsure which allows us to relate the semistability of a Higgs bundle with the semistability of the underlying vector bundle when the base variety $X$ satisfies some conditions. This also allows one to characterize some classes of varieties such that  the underlying vector bundle of a slope semistable Higgs bundle is always slope semistable. 

In section \ref{Higgsvarieties} we give the notion of Higgs variety (Definition \ref{defHiggsvariety}) as a variety $X$ such that whenever a Higgs bundle $\E=(E,\phi)$ over $X$ is such that $f^\ast \E$ semistable for any morphism $f\colon C\to X$, where $C$ is a smooth projective curve, then the equality $\Delta(E)=0$   holds in $H^4(X,\rr)$. Using a result of Biswas and Dos Santos \cite{BD} we prove that rationally connected varieties are Higgs varieties. 
In a similar way one proves that Abelian varieties are Higgs varieties. In the second  part of this section we construct more Higgs varieties starting from the previous ones. In particular we show that finite \'etale quotients of Higgs varieties  and fibrations over Higgs varieties  with  rationally connected fibres are Higgs varieties. Since any variety with nef tangent bundle is, up to an \'etale covering,  a Fano fibration over an Abelian variety (Theorem \ref{theoremDPS}), we conclude that varieties with nef tangent bundle are Higgs varieties.

Finally we formulate our results on Higgs varieties in terms of principal Higgs bundles.

\smallskip

{\noindent\bf Acknowledgment.} We thank Alex Massarenti, Fabio Perroni, Gurjar Sudarshan and Pietro Tortella for useful discussions.  We thank the referees for their several useful comments.

\bigskip
\section{Semistable (Higgs) bundles}\label{SemistableHiggs}

Let $X$ be a smooth $n$-dimensional projective variety over the complex field and let $H$ be the numerical class of an ample line bundle on $X$. For any rank $r$ coherent sheaf $E$ we denote by $c_i(E)\in H^{2i}(X,\rr)$ its Chern classes and define the slope of $E$ (when $r>0$) as
\[
\mu(E)\colon = \frac{c_1(E)\cdot H^{n-1}}{r}.
\]
The Hilbert polynomial $P_E$ is defined as $P_E(m)=\chi(E\otimes \ox(mH))$, while
\[
\Delta(E)\colon = c_2(E)-\frac{r-1}{2r} c_1^2(E) \in H^4(X,\rr)
\] is 
the discriminant of $E$.
We recall the basic definitions of stability.

\begin{definition}
A vector bundle $E$ over the polarized variety $(X,H)$ is called stable if for all subsheaves $F\subset E$ with $0<\rk(F)<\rk(E)$ one has
\[
\frac{P_F(m)}{\rk(F)} < \frac{P_E(m)}{\rk(E)}\quad\hbox{for}\quad m\gg 0;
\]
it is called $\mu$-stable if 
\[
\mu(F)< \mu(E) .
\]
\end{definition}

The corresponding notions of semistability are obtained by replacing strict inequalities with the $\leq$ relation. 
The ratio $p_E\colon =\displaystyle\frac{P_E}{\rk(E)}$ is called the {\em reduced Hilbert polynomial} of $E$. The notion of (semi)stability given in terms of the reduced Hilbert polynomial will be sometimes called {\em Gieseker stability}, to distinguish it from $\mu$-(semi)stability when confusion might occur.

\begin{remark}
There is a chain of implications
 $\mu\textit{-stable} \Rightarrow \textit{stable}\Rightarrow  \textit{semistable}\Rightarrow \mu\operatorname{-semistable}. $
\end{remark}

\begin{remark} If $X$ is a smooth irreducible projective curve, the notions of (semi)stability and $\mu$-(semi)stability coincide. 
\end{remark}

The semistability of vector bundles is compatible with direct sums, tensor products and twists by line bundles in the following sense.

\begin{itemize} \itemsep=2pt
\item The direct sum of two ($\mu$)-semistable vector bundles is ($\mu$)-semistable if and only they have the same reduced Hilbert polynomial (slope).

\item $E$ is a ($\mu$)-semistable vector bundle if and only if $E\otimes L$ is ($\mu$)-semistable for any line bundle $L$
(this is true also for stability).

\item If $E_1$ and $E_2$ are $\mu$-semistable then $E_1\otimes E_2$ is $\mu$-semistable.

\item If $f\colon Y\to X$ is a   finite \'etale morphism of smooth projective curves, a vector bundle $F$  on $X$ is semistable    if and only if $f^*F$ is semistable.
\end{itemize}

We recall that a line bundle $L\in\Pic(X)$ is said to be numerically effective (nef) if $\deg f^*L\geq 0$ for every morphism $f\colon C\to X$ where $C$ is an irreducible smooth projective curve. A divisor $D$ is said to be numerically effective if the line bundle $\mathcal O_X(D)$ is.  A vector bundle $E$ is said to be nef if the relative hyperplane bundle $\os_{\pp(E)}(1)$ on the projective bundle $\pp(E)$ is nef. It is  called projectively flat if $\pp(E)$ is given by a projective  representation of the fundamental group of $X$
\[
\rho\colon \pi_1(X)\to \pp U(\rk(E)).
\]

\subsection{Around Bogomolov's inequality}

The characteristic class $\Delta(E)$ of a $\mu$-semistable vector bundle $E$ satisfies a numerical inequality, known as {\em Bogomolov's inequality.}

\begin{theorem}
Let $(X,H)$ be a polarized smooth complex projective variety of dimension $n\ge 2$, and $E$ a $\mu$-semistable vector bundle  on $X$. 
The inequality  
\[
 \Delta(E)\cdot H^{n-2}\geq 0
\]
holds.
\end{theorem}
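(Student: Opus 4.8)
The plan is to reduce the statement to the case of surfaces and then to settle the surface case by a tensor-power argument combined with Riemann--Roch. Throughout we work over $\cc$, so that the tensor-product property recalled above (the tensor product of two $\mu$-semistable bundles is $\mu$-semistable) is available.

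First I would invoke the classical Mehta--Ramanathan restriction theorem: for $m\gg 0$ and general divisors $D_1,\dots,D_{n-2}\in|mH|$, the complete intersection $S=D_1\cap\cdots\cap D_{n-2}$ is a smooth projective surface and the restriction $E|_S$ is again $\mu$-semistable. Since the number $\Delta(E|_S)$ equals $m^{n-2}\,\Delta(E)\cdot H^{n-2}$, the sign of $\Delta(E)\cdot H^{n-2}$ coincides with that of $\Delta(E|_S)$; hence it suffices to prove $\Delta(E)\ge 0$ for a $\mu$-semistable bundle $E$ on a smooth projective surface $S$. On such a surface I would next normalize $c_1$: the bundles $E$ and $E^\ast$ are $\mu$-semistable, so $\End(E)=E\otimes E^\ast$ is $\mu$-semistable, and one has $c_1(\End(E))=0$ together with the Chern-class identity $\Delta(\End(E))=c_2(\End(E))=2r\,\Delta(E)$. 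As $2r>0$, it is therefore enough to show $c_2(G)\ge 0$ for a $\mu$-semistable bundle $G$ with $c_1(G)=0$ on $S$; set $\rho=\rk(G)$.

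The heart of the argument is a growth estimate for the tensor powers $G^{\otimes k}$. By the tensor-product property these are $\mu$-semistable of slope $0$, and from $\operatorname{ch}(G^{\otimes k})=\operatorname{ch}(G)^k$ together with $c_1(G)=0$ one finds $c_1(G^{\otimes k})=0$ and $c_2(G^{\otimes k})=k\,\rho^{k-1}\,c_2(G)$. Riemann--Roch on $S$ then gives
\[
\chi(G^{\otimes k})=\rho^{k}\,\chi(\os_S)-k\,\rho^{k-1}\,c_2(G).
\]
I would bound the cohomology by the boundedness estimate for semistable sheaves (Le Potier, Simpson): a $\mu$-semistable sheaf $V$ on $(S,H)$ whose slope lies in a fixed bounded range satisfies $h^0(V)\le c\,\rk(V)$, with $c$ depending only on $(S,H)$ and the slope bound. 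Applied to $G^{\otimes k}$ (of slope $0$) and, through Serre duality $h^2(G^{\otimes k})=h^0((G^{\otimes k})^\vee\otimes\omega_S)$, to $(G^{\otimes k})^\vee\otimes\omega_S$ (whose slope $K_S\cdot H$ is independent of $k$), this yields $h^0(G^{\otimes k})+h^2(G^{\otimes k})\le c'\,\rho^{k}$ with $c'$ independent of $k$, whence $\chi(G^{\otimes k})\le c'\,\rho^{k}$. Dividing the Riemann--Roch identity by $\rho^{k}$, the left-hand side stays bounded while, if $c_2(G)<0$, the right-hand side $\chi(\os_S)+(k/\rho)\,|c_2(G)|$ tends to $+\infty$; this contradiction forces $c_2(G)\ge 0$, i.e. $\Delta(E)\ge 0$.

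The step I expect to require the most care is the uniform-in-$k$ cohomological bound of the previous paragraph: it rests on the restriction theorems (Mehta--Ramanathan, Grauert--M\"ulich), which guarantee that semistability is inherited by a general hyperplane-section curve, and on the elementary curve estimate that a semistable bundle of controlled slope has a number of sections bounded linearly by its rank. These are precisely the inputs that make the estimate independent of $k$, which is what the comparison with the linearly growing Riemann--Roch term demands. By contrast, the reduction to surfaces, the passage to $\End(E)$, and the Chern-class bookkeeping are entirely formal.
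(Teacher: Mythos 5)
The paper does not prove this statement: it is quoted as the classical Bogomolov inequality with a citation to [HL], so there is no internal proof to compare against. Your argument is a correct sketch of exactly the standard proof found in that reference --- reduction to a surface via Mehta--Ramanathan, passage to $\End(E)$ to normalize $c_1=0$, and the tensor-power/Riemann--Roch contradiction using the Le Potier--Simpson type bound $h^0(V)\le c\,\rk(V)$ for $\mu$-semistable $V$ of bounded slope (with $h^2$ handled by Serre duality); the Chern-class computations $\Delta(\End(E))=2r\,\Delta(E)$ and $c_2(G^{\otimes k})=k\rho^{k-1}c_2(G)$ are correct.
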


The next result in a sense characterizes the vector bundles which satisfy a strong form of Bogomolov's inequality ($\Delta(E)= 0$)
as those bundles that are $\mu$-semistable after restriction to any curve in $X$. This can also be regarded as a higher-dimensional characterization of Miyaoka's criterion for semistability \cite{Mi}, and as a weak form  of the Metha-Ramanathan restriction theorem \cite{MR1}.

\begin{theorem} {\rm \cite{BHR}}\label{TeoBO}
Let $E$ be a vector bundle of rank $r$ on a polarized projective variety $(X,H)$. Then the following conditions are equivalent$\colon$ 

\begin{enumerate}\itemsep=2pt
 \item for every morphism $f\colon C\to X$, where $C$  is a smooth irreducible projective curve, $f^\ast E$ is semistable;
 
 \item $E$ is $\mu$-semistable with respect to $H$, and  $\Delta(E)=0$;

 \item $E$ admits a filtration into subsheaves
\[
0=E_0\subset E_1\subset\dots\subset E_t=E, 
\]
such that the quotients $E_i/E_{i-1}$ are projective flat bundles, and   $\mu(E_i/E_{i-1})=\mu(E)$ for all $i$.
\end{enumerate}

\end{theorem}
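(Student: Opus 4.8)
The plan is to establish the cyclic chain of implications $(3)\Rightarrow(1)\Rightarrow(2)\Rightarrow(3)$, leaning on the Bogomolov inequality of the previous theorem together with two external inputs: the result of \cite{DPS} that a numerically flat bundle has vanishing Chern classes, and the Kobayashi--Hitchin correspondence in its \emph{equality} form, to the effect that a $\mu$-stable bundle whose discriminant annihilates $H^{n-2}$ is projectively flat. The implication $(3)\Rightarrow(1)$ is the soft one: given $f\colon C\to X$, I would pull back the filtration to $f^\ast E$. Each $f^\ast(E_i/E_{i-1})$ is again projectively flat, hence $\mu$-semistable on the curve $C$, and all of these graded pieces share the single slope $\deg(f)\,\mu(E)$. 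Since a bundle admitting a filtration whose quotients are semistable of one and the same slope is itself semistable, $f^\ast E$ is semistable.

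For $(1)\Rightarrow(2)$ I would argue in two stages. That $E$ is $\mu$-semistable is the easy half: were there $F\subset E$ with $\mu(F)>\mu(E)$, then restricting to a general complete intersection curve $C$ of sufficiently high degree the inclusion $F|_C\subset E|_C$ would still violate semistability, since both slopes scale by the same power of the degree (this is the elementary half of the Mehta--Ramanathan restriction theorem \cite{MR1}); this contradicts $(1)$. The vanishing of $\Delta(E)$ is the crux, and here I would pass to $\End(E)=E\otimes E^\vee$. By $(1)$ together with the recalled fact that a tensor product of $\mu$-semistable bundles is $\mu$-semistable, $f^\ast\End(E)=\End(f^\ast E)$ is semistable of degree $0$ for every $f\colon C\to X$; a semistable degree-zero bundle on a curve is nef, and since nefness of a bundle is tested on curves, $\End(E)$ is nef. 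Because $\End(E)$ is self-dual it is then numerically flat, so by \cite{DPS} all its Chern classes vanish. Finally the identity $c_2(\End E)=2r\,c_2(E)-(r-1)c_1^2(E)=2r\,\Delta(E)$ forces $\Delta(E)=0$ in $H^4(X,\rr)$.

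For $(2)\Rightarrow(3)$ I would take a Jordan--H\"older filtration of $E$ with respect to $\mu$-stability, so that the graded pieces $G_i=E_i/E_{i-1}$ are $\mu$-stable of slope $\mu(E)$; the task is to show that each $G_i$ is projectively flat. A direct Chern class computation for a short exact sequence gives that $\Delta(E)-\sum_i\Delta(G_i)$ is a sum of correction terms of the shape $-D_{ij}^2/(2r_ir_j r)$ with $D_{ij}=r_jc_1(G_i)-r_ic_1(G_j)$. Since all the slopes coincide, $D_{ij}\cdot H^{n-1}=0$, so by the Hodge index theorem $D_{ij}^2\cdot H^{n-2}\le0$ and each correction term pairs non-negatively with $H^{n-2}$. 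As each $\Delta(G_i)\cdot H^{n-2}\ge0$ by Bogomolov and the total $\Delta(E)\cdot H^{n-2}=0$, every summand must vanish; in particular $\Delta(G_i)\cdot H^{n-2}=0$ for all $i$. Invoking the Kobayashi--Hitchin correspondence, each $\mu$-stable $G_i$ carries a Hermitian--Einstein metric, and L\"ubke's inequality, being an equality exactly here, forces the curvature to be central. Hence $G_i$ is projectively flat (and $\Delta(G_i)=0$ as a class), which is precisely the filtration demanded by $(3)$.

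The main obstacle is the passage from a numerical identity to the vanishing of a full class: the Bogomolov inequality only controls the single number $\Delta\cdot H^{n-2}$, whereas both $(2)$ and $(3)$ require the class in $H^4(X,\rr)$ itself to vanish. Bridging this gap is exactly what the two deep inputs accomplish --- \cite{DPS} upgrades numerical flatness of $\End(E)$ to the vanishing of all its Chern classes, and the equality case of the Kobayashi--Hitchin/L\"ubke theory upgrades $\Delta(G_i)\cdot H^{n-2}=0$ to projective flatness. I would expect the most delicate bookkeeping to be the careful treatment of torsion-free (rather than a priori locally free) Jordan--H\"older quotients and the verification that the analytic equality case indeed returns a genuine projectively flat \emph{bundle}.
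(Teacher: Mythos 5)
The paper does not actually prove this theorem: it is quoted verbatim from \cite{BHR}, so there is no internal proof to compare against. Your argument is, in substance, the one in \cite{BHR} (and the one the introduction of this paper alludes to): the crux $(1)\Rightarrow(2)$ via nefness of the self-dual bundle $\End(E)$ on all curves, hence numerical flatness, hence vanishing of $c_2(\End E)=2r\,\Delta(E)$ by \cite{DPS}; and $(2)\Rightarrow(3)$ via the Bogomolov/Hodge-index bookkeeping on a $\mu$-Jordan--H\"older filtration followed by the equality case of the Kobayashi--Hitchin/L\"ubke theory (with Bando--Siu needed, as you note, to rule out non-locally-free stable factors). Those two implications, and the easy half of Mehta--Ramanathan for the $\mu$-semistability in $(1)\Rightarrow(2)$, are all correct as written.

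One step deserves a caveat: in $(3)\Rightarrow(1)$ you assert that the pulled-back graded pieces ``share the single slope $\deg(f)\,\mu(E)$.'' From the hypothesis as literally stated --- only the equality of the numbers $\mu(E_i/E_{i-1})=c_1(E_i/E_{i-1})\cdot H^{n-1}/\rk(E_i/E_{i-1})$ --- this does not follow. Take $E=L_1\oplus L_2$ with $L_1\cdot H^{n-1}=L_2\cdot H^{n-1}$ but $L_1$ not numerically equivalent to $L_2$: each summand is projectively flat of slope $\mu(E)$, yet $f^*E$ is unstable on any curve where $\deg f^*L_1\neq\deg f^*L_2$, and indeed $\Delta(E)=-\tfrac14(L_1-L_2)^2\neq0$. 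So condition (3) as printed is too weak; the correct normalization is $c_1(E_i/E_{i-1})/\rk(E_i/E_{i-1})=c_1(E)/r$ in $H^2(X,\rr)$, which is exactly what your own $(2)\Rightarrow(3)$ argument produces through the equality case of the Hodge index theorem. With that strengthening of (3) --- an imprecision inherited from the statement rather than a flaw of your strategy --- your cycle of implications closes.
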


\begin{remark} Since condition (1) does not depend on the choice of the polarization $H$, it turns out that
 if a vector bundle $E$ with $\Delta(E)=0$ is $\mu$-semistable with respect to a polarization, then it is $\mu$-semistable with respect to {\em all} polarizations.
\end{remark}

We want to extend the previous theorem to $\mu$-semistable Higgs bundles. At the first let us recall the definition of Higgs bundle.

\begin{definition}\label{Higgsbundle}
A Higgs bundle on $X$ is a pair $\E=(E,\phi)$ consisting of a holomorphic vector bundle $E$ on $X$ and a morphism   $\phi\colon E\to E\otimes \Omega^1_X$, called the Higgs field, such that the morphism $\phi\wedge\phi\colon E\to E\otimes \Omega^2_X$ vanishes.
\end{definition}

A subsheaf $F\subset E$ is called $\phi$-invariant if $\phi(F)\subset F\otimes  \Omega^1_X$.

\begin{definition}\label{SemistabilityHiggsvectorbundle}
A Higgs bundle $\E=(E,\phi)$ is (semi)stable if for any $\phi$-invariant subsheaf $F$, with $ 0 < \rk F < \rk E$, one has
\begin{equation}
\frac{P_F(m)}{\rk(F)}    (\leq)\frac{P_E(m)}{\rk(E)} \quad\hbox{for}\quad m\gg 0;
\end{equation}
$(E,\phi)$ is called polystable if it is semistable as a Higgs bundle and it is isomorphic to a direct sum of stable Higgs bundles. 
\end{definition}

Again, a definition of $\mu$-(semi)stability can be given in the obvious way.

\begin{example}
Let $E = K^{\frac{1}{2}} \oplus K^{-\frac{1}{2}}$, where $K^{\frac{1}{2}}$ is a complex line bundle whose square is the canonical bundle of a smooth projective curve $X$.   We obtain a family of Higgs fields on $E$ parametrized by quadratic differentials, i.e., sections  of the line bundle $K^2 \simeq \mathcal \Hom(K^{-\frac{1}{2}},K^{\frac{1}{2}} \otimes K),$ by setting 
\[
\phi=
\left(
\begin{array}{cc}
0 & \omega \\
1 & 0 
\end{array}
\right)
\]
where 1 is the identity section of the trivial bundle $\mathcal \Hom(K^{\frac{1}{2}},K^{-\frac{1}{2}} \otimes K)$ and $\omega\in H^0(X,K^2)$.
Now,
$(E,\phi)$ is a stable Higgs bundle since $K^{\frac{1}{2}}$ is not $\phi$-invariant and there are no subbundles of positive degree preserved by $\phi$. However if the genus of the curve is   greater than 1,  $E$ is not semistable as a vector bundle. This example shows that the nefness of the anticanonical bundle is an obstruction to the existence of  semistable Higgs vector bundles that are not semistable in the usual sense.
\end{example}

\subsection{The case of Higgs bundles} 

For Higgs bundles conditions (2) and (3) in Theorem \ref{TeoBO} are equivalent, and both imply (1).
While in general it is not true   that (1) implies (2) or (3) (a counterexample is given in \cite{BBG}),
the implication holds for 
 some classes of varieties. In particular we will prove this for smooth projective varieties with nef tangent bundle. The idea is to connect Higgs semistability to classical semistability and thus apply Theorem \ref{TeoBO}.
We need to enlarge a little bit the class of bundles we consider. A holomorphic pair $(E,\phi)$ is a pair where $E$ is a holomorphic vector bundle on a smooth projective variety and $\phi$ is a morphism of vector bundles $\phi\colon E \to E \otimes M$ with $M$ a fixed vector bundle (thus, holomorphic pairs are special cases of {\em framed sheaves}, see \cite{HL1,HL2} for this  notion. $(E,\phi)$ is called semistable if and only if   for any $\phi$-invariant subsheaf $F$ of $E$ one has
\[
\frac{P_F(m)}{\rk(F)}\leq \frac{P_E(m)}{\rk(E)}\quad\hbox{for}\quad m\gg 0.
\]
These objects  were introduced by Nitsure in \cite{Ni}, where he studied the moduli space of semistable pairs over smooth projective curves. Here we extend some results to   varieties of any dimension.

\begin{theorem}\label{TeoHs}
Let $(X,H)$ be a polarized smooth projective variety. If $(E,\phi)$, with $\phi\colon E\to E\otimes M$, is  a $\mu$-semistable pair, and $M$ is a  $\mu$-semistable vector bundle of nonpositive degree, then $E$ is $\mu$-semistable as a vector bundle. 
\end{theorem}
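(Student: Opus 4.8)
The plan is to argue by contradiction, exploiting the canonicity of the Harder--Narasimhan filtration. Suppose $E$ were not $\mu$-semistable as a vector bundle. Then it would possess a nontrivial maximal destabilizing subsheaf $F\subset E$, namely the first step of the Harder--Narasimhan filtration: $F$ is $\mu$-semistable, its slope equals $\mu_{\max}(E)>\mu(E)$, and every Harder--Narasimhan factor of the quotient $E/F$ has slope strictly less than $\mu(F)$, so that $\mu_{\max}(E/F)<\mu(F)$. The strategy is to show that such an $F$ is automatically $\phi$-invariant; once this is established, the $\mu$-semistability of the \emph{pair} $(E,\phi)$ forces $\mu(F)\le\mu(E)$, contradicting $\mu(F)>\mu(E)$ and finishing the proof.

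To prove that $F$ is $\phi$-invariant, I would consider the composite morphism
\[
\bar\phi\colon F\hookrightarrow E\xrightarrow{\ \phi\ } E\otimes M\twoheadrightarrow (E/F)\otimes M,
\]
and aim to show $\bar\phi=0$, which is precisely the condition $\phi(F)\subset F\otimes M$. For this I would invoke the standard vanishing principle for slopes: if $A$ and $B$ are coherent sheaves with $\mu_{\min}(A)>\mu_{\max}(B)$, then $\Hom(A,B)=0$. Here $A=F$ is $\mu$-semistable, so $\mu_{\min}(F)=\mu(F)$, and it remains to bound $\mu_{\max}\big((E/F)\otimes M\big)$ from above by $\mu(F)$.

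The key estimate is the additivity $\mu_{\max}\big((E/F)\otimes M\big)=\mu_{\max}(E/F)+\mu_{\max}(M)$. Since $M$ is $\mu$-semistable we have $\mu_{\max}(M)=\mu(M)$, and the hypothesis that $M$ has nonpositive degree gives $\mu(M)\le 0$; hence
\[
\mu_{\max}\big((E/F)\otimes M\big)\le\mu_{\max}(E/F)<\mu(F)=\mu_{\min}(F).
\]
The vanishing principle then yields $\bar\phi=0$, as required, and the contradiction above completes the argument.

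The hard part will be justifying the additivity $\mu_{\max}(A\otimes B)=\mu_{\max}(A)+\mu_{\max}(B)$ for the two tensor factors. This rests on the nontrivial fact --- valid over the complex field, i.e. in characteristic zero --- that the tensor product of two $\mu$-semistable sheaves is again $\mu$-semistable, a property already recorded in the list of compatibility properties above. Granting it, the tensor products of the Harder--Narasimhan factors of $E/F$ and $M$ give a filtration of $(E/F)\otimes M$ by $\mu$-semistable sheaves whose slopes are all at most $\mu_{\max}(E/F)+\mu_{\max}(M)$, which proves the estimate. This is the one place where the characteristic-zero hypothesis is essential; every other step is a formal consequence of the theory of the Harder--Narasimhan filtration.
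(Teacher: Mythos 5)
Your proof is correct and is essentially the paper's own argument: both assume $E$ is not $\mu$-semistable, use the first step $F=E_1$ of the Harder--Narasimhan filtration, and derive a contradiction from the vanishing of maps from $F$ into semistable pieces of slope $<\mu(F)$, relying on the characteristic-zero fact that tensor products of $\mu$-semistable sheaves are $\mu$-semistable. The only (cosmetic) difference is that you bound $\mu_{\max}\big((E/F)\otimes M\big)$ all at once to show $F$ is $\phi$-invariant, whereas the paper singles out the smallest graded piece $E_j/E_{j-1}$ receiving a nonzero component of $\phi(E_1)$ and contradicts $\mu_1>\mu_j$ directly.
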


\begin{proof}
Let us assume $E$ not $\mu$-semistable and consider its Harder-Narasimhan filtration \cite{HL}
\[
0\subset E_1\subset \dots \subset E_t\subset E.  
\]
If we denote by $\mu_i$ the slope of $E^i\colon = E_i/E_{i-1}$, then $\mu_1>\mu(E)$, so that $E_1$ is not $\phi$-invariant. Moreover by construction  $\mu_i >\mu_j$ if $i<j$. Let $j$ be the smallest integer such that $\phi(E_1)\subset E_j\otimes M$. Hence the homomorphism $\phi\colon E_1\to E^j\otimes M$ is not zero.
The bundle $M^j\colon= E^j\otimes M$ is $\mu$-semistable and its slope is $\mu(E^j)+\mu(M)$. By hypothesis $\deg(M)\leq 0$ hence $\mu(M^j)\leq \mu(E^j)$, however since $\Hom(E_1,M^j)\neq 0$ we get 
\[
\mu(E_1)\leq \mu(M^j)\leq \mu(E^j),
\]
and this contradicts the assumption $\mu_1>\mu_j$. So $E$ is $\mu$-semistable. 
\end{proof}
 
 \begin{corollary}\label{cor} If $\mathcal E =(E,\phi)$ is a $\mu$-semistable Higgs bundle on a smooth projective polarized variety $(X,H)$,
 whose cotangent bundle is $\mu$-semistable and has nonpositive degree, then $E$ is $\mu$-semistable.
 \end{corollary}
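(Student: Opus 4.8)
The plan is to obtain this directly from Theorem \ref{TeoHs} by specializing the framing bundle to $M=\Omega^1_X$. First I would observe that a Higgs bundle $(E,\phi)$ is nothing but a particular holomorphic pair: the Higgs field is by definition a morphism $\phi\colon E\to E\otimes\Omega^1_X$, so taking $M=\Omega^1_X$ exhibits $\mathcal E=(E,\phi)$ as a pair in the precise sense used in Theorem \ref{TeoHs}. The integrability condition $\phi\wedge\phi=0$ built into the definition of a Higgs bundle plays no role in the argument and may simply be ignored; what remains is exactly the data of a holomorphic pair.

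Next I would check that the two relevant notions of semistability coincide. For a Higgs bundle a subsheaf $F\subset E$ is $\phi$-invariant precisely when $\phi(F)\subset F\otimes\Omega^1_X$, and this is literally the $\phi$-invariance condition for the pair $(E,\phi)$ once we set $M=\Omega^1_X$. Hence the family of $\phi$-invariant subsheaves is the same whether $(E,\phi)$ is viewed as a Higgs bundle or as a holomorphic pair, and therefore $\mu$-semistability as a Higgs bundle is the same condition as $\mu$-semistability as a pair.

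With these identifications in place the hypotheses of Theorem \ref{TeoHs} are met: $(E,\phi)$ is a $\mu$-semistable pair with framing bundle $M=\Omega^1_X$, and by assumption the cotangent bundle $\Omega^1_X$ is $\mu$-semistable and of nonpositive degree. The theorem then yields that $E$ is $\mu$-semistable as a vector bundle, which is exactly the claim. I do not expect any serious obstacle here, since the statement is a straightforward specialization; the only point requiring care is the bookkeeping that matches the Higgs-bundle and holomorphic-pair frameworks, namely confirming that the two $\phi$-invariance conditions, and hence the two semistability conditions, agree verbatim when $M=\Omega^1_X$.
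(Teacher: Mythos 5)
Your proposal is correct and is exactly the argument the paper intends: the corollary is stated without proof precisely because it is the immediate specialization of Theorem \ref{TeoHs} to the framing bundle $M=\Omega^1_X$, with the observation that $\phi$-invariance and hence $\mu$-semistability agree in the two frameworks. Nothing further is needed.
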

 
Examples of such varieties are projective spaces, Grassmannians,  a large class of Fano varieties, and  smooth projective varieties with numerically trivial canonical divisor \cite{GKP}, in particular, Calabi-Yau varieties.

\begin{remark}
In \cite{Ni} Nitsure  proved that a holomorphic pair $(E,\phi)$, where $\phi\colon E\to E\otimes L$ with $L$ a line bundle of degree zero over a smooth projective curve, is semistable if and only if $E$ is semistable as a vector bundle. It follows that the underlying vector bundle of  semistable Higgs bundle $(E,\phi)$ over an elliptic curve is semistable. Thus Corollary \ref{cor} extends this result to some classes of higher dimensional varieties.
Generalizations in other directions were given by E.~Franco G\'omez \cite{EF} (he considered the analogous result for semistability and stability of principal Higgs $G$-bundles on an elliptic curve, where $G$ is a complex linear reductive algebraic group).
\end{remark}

\bigskip
\section{Higgs varieties}\label{Higgsvarieties} 
In this section by (semi)stability we shall always mean $\mu$-(semi)stability.
Given a semistable Higgs vector bundle $(E,\phi)$ such that $\Delta(E)=0$ then  for any morphism $f\colon C\to X$, where $C$ is a smooth projective curve, the pullback $f^\ast (E,\phi)$ is semistable as a Higgs bundle. 
In this section we show that the converse result holds   for some classes of varieties.

\begin{definition}\label{defHiggsvariety}
We say that a projective variety $X$ is a Higgs variety if $\dim(X)=1$, or, in the case $\dim(X)>1$,   the following property holds: if  $\mathcal E=(E,\phi)$ is a Higgs vector bundle   on $X$  such that for any morphism $f\colon C\to X$ from a smooth projective curve $C$ the pullback $f^*\mathcal E$ is semistable as a Higgs bundle,  then 
$
\Delta(E)=0.
$
\end{definition}

We will prove that rationally connected varieties and Abelian varieties are Higgs varieties. Moreover, we shall see that finite \'etale quotients of Higgs varieties, and  fibrations over Higgs varieties with
 rationally connected fibres  are Higgs varieties. In particular this facts allows us to prove that any projective variety with nef tangent bundle is a Higgs variety.

\subsection{Rationally connected varieties}
Here we recall some general facts about rationally connected varieties. More details can be found in \cite{MP}.
\begin{definition}
A variety $X$ is   rationally connected if any two general points in $X$ are connected by a chain of rational curves.
\end{definition}
\begin{proposition}\label{ampledivisor}
If a smooth ample divisor  $D$ in a smooth projective variety  $X$ is rationally connected, then 
$X$ is   rationally connected as well.
\end{proposition}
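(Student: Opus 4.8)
The plan is to reduce the statement to the existence of a single very free rational curve. Recall that, over $\cc$, a smooth projective variety $Z$ is rationally connected (in the sense of the definition above, the chain condition being equivalent to the one-curve condition for smooth proper varieties) precisely when it admits a \emph{very free} rational curve, i.e.\ a nonconstant morphism $g\colon \pp^1\to Z$ such that $g^\ast T_Z$ is an ample vector bundle on $\pp^1$; this is the criterion of Koll\'ar--Miyaoka--Mori. I would prove the proposition by producing such a curve on $X$ out of one on $D$.

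First I would use that $D$ is rationally connected to fix a very free rational curve $f\colon \pp^1\to D$, so that $f^\ast T_D$ is ample. Writing $\iota\colon D\hookrightarrow X$ for the inclusion, I would then invoke the normal bundle exact sequence
\[
0\to T_D\to \iota^\ast T_X\to N_{D/X}\to 0,
\]
and observe that, because $D$ is an ample divisor, its normal bundle $N_{D/X}\cong \ox(D)|_D$ is ample, being the restriction to the subvariety $D$ of the ample line bundle $\ox(D)$.

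Next I would pull this sequence back along $f$ to obtain, on $\pp^1$,
\[
0\to f^\ast T_D\to (\iota\circ f)^\ast T_X\to f^\ast N_{D/X}\to 0 .
\]
Here $f^\ast T_D$ is ample by the choice of $f$, while $f^\ast N_{D/X}$ is a line bundle of positive degree on $\pp^1$ — since $f$ is nonconstant and $N_{D/X}$ is ample — hence also ample. As an extension of ample bundles is again ample, the middle term $(\iota\circ f)^\ast T_X$ is ample. Thus $\iota\circ f\colon \pp^1\to X$ is a very free rational curve on $X$, and the same criterion yields that $X$ is rationally connected.

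The only substantial input is the equivalence between rational connectedness and the existence of a very free curve, which is exactly where the deformation-theoretic work (and any positive-characteristic subtlety) is concentrated; over $\cc$ this causes no trouble. Everything else is formal, so the point to check carefully is merely that ampleness propagates through the extension on the curve and that ampleness of $D$ forces ampleness of $N_{D/X}$. I therefore expect the argument itself to be short, with all the real difficulty hidden inside the quoted criterion.
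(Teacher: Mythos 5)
Your argument is correct. Note first that the paper does not prove this proposition at all: it is stated in a subsection that ``recalls some general facts about rationally connected varieties'' and is attributed to the reference [MP], so there is no in-paper proof to compare against. Your proof is the standard one and all its steps check out: over $\cc$ a smooth projective variety is rationally connected if and only if it carries a very free rational curve (Koll\'ar--Miyaoka--Mori; and as you say, for smooth proper varieties the chain condition in the paper's definition is equivalent to the single-curve condition); the normal bundle sequence $0\to T_D\to T_X|_D\to N_{D/X}\to 0$ with $N_{D/X}\cong \ox(D)|_D$ ample; and the pullback along a very free $f\colon\pp^1\to D$ exhibits $(\iota\circ f)^\ast T_X$ as an extension of the positive-degree line bundle $f^\ast N_{D/X}$ by the ample bundle $f^\ast T_D$. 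The only point you wave at rather than justify is that an extension of ample bundles is again ample; on $\pp^1$ this is immediate (if $E$ had a summand $\os(a)$ with $a\le 0$, the projection onto it would kill the ample sub and hence factor through the ample quotient, contradicting that every line bundle quotient of an ample bundle on a curve has positive degree). The one implicit hypothesis is $\dim D\ge 1$, so that a nonconstant very free curve on $D$ exists; this is harmless in the context where the paper uses the proposition.
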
 

By Theorem \ref{TeoHs} the negativity of the canonical bundle is an obstruction to the existence of semistable Higgs bundles whose underlying vector bundle is not semistable. For vector bundles on rationally connected varieties we have the following results due to Biswas and Dos Santos \cite{BD}.

\begin{proposition}\label{biswasdossantos}
Let $X$ be a rationally connected variety. Let $E \to X$ be a vector bundle such that for every morphism $f\colon \pp^1\to X$, the pullback $f^*E$ is trivial. Then $E$ itself is trivial. 
\end{proposition}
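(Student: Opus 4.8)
The plan is to trivialize $E$ by transporting a fixed frame of a single fibre along rational curves, the hypothesis guaranteeing that this transport is canonical. Fix a general point $x_0\in X$. Since $X$ is rationally connected, the theory of very free rational curves (Koll\'ar--Miyaoka--Mori) provides a smooth irreducible variety $M$ parametrizing morphisms $g\colon\pp^1\to X$ with $g(0)=x_0$, together with the universal evaluation
\[
u\colon \pp^1\times M\to X,\qquad u(t,g)=g(t),
\]
for which $u|_{\{0\}\times M}$ is the constant map $x_0$, while $u_\infty:=u|_{\{\infty\}\times M}\colon M\to X$, $g\mapsto g(\infty)$, is dominant and smooth onto a dense open set $U\ni x_0$. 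Denote by $p\colon\pp^1\times M\to M$ the projection.

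First I would prove that $u^\ast E$ is trivial. For each $g\in M$ the restriction of $u^\ast E$ to $\pp^1\times\{g\}$ equals $g^\ast E$, which is trivial by hypothesis; thus $h^0(\pp^1,g^\ast E)=r$ and $h^1(\pp^1,g^\ast E)=0$ independently of $g$. Cohomology and base change then show that $W:=p_\ast(u^\ast E)$ is locally free of rank $r$ and that the evaluation morphism $p^\ast W\to u^\ast E$, being an isomorphism on every fibre of $p$, is an isomorphism. Restricting it to $\{0\}\times M$, where $u$ is constant equal to $x_0$, identifies $W$ with the trivial bundle $\os_M\otimes E_{x_0}$. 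Hence $u^\ast E\cong p^\ast W$ is trivial, and a basis of $E_{x_0}$ yields global sections $\sigma_1,\dots,\sigma_r$ of $u^\ast E$ forming a frame at every point. Their restrictions to $\{\infty\}\times M$ frame $u_\infty^\ast E$; by construction $\sigma_i(g)$ is the image of the $i$-th basis vector under the identification $E_{x_0}\cong E_{g(\infty)}$ determined by the trivialization of $g^\ast E$.

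The heart of the argument is to descend the $\sigma_i$ along $u_\infty\colon M\to U$, that is, to show that this identification $E_{x_0}\cong E_x$ is independent of the connecting curve $g$ in the fibre $F_x:=u_\infty^{-1}(x)$. Since $u_\infty$ is constant on $F_x$, the bundle $u_\infty^\ast E|_{F_x}$ is the constant bundle with fibre $E_x$, and each $\sigma_i|_{F_x}$ is a morphism $F_x\to E_x$ into an affine space. Any such morphism is constant once $F_x$ is proper and connected, or, more generally, rationally connected; and the fibres of the evaluation map of a family of very free curves through a fixed point do possess such a connectedness property. This is precisely where the geometry of rationally connected varieties enters, and I expect it to be the main obstacle: one must choose $M$ (and, if needed, a suitable compactification) so that the fibres $F_x$ are connected and the sections $\sigma_i$ extend over the added boundary, which then forces $\sigma_i|_{F_x}$ to be constant. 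Granting this, the $\sigma_i$ descend to sections $s_1,\dots,s_r\in H^0(U,E)$ framing $E|_U$, so that $E|_U\cong\os_U^{\,r}$.

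It remains to pass from $U$ to $X$. Choosing $x_0$ general, one can arrange $X\setminus U$ to have codimension at least two; as $E$ is locally free, hence reflexive, the frame $s_1,\dots,s_r$ extends to global sections by Hartogs' theorem, and the extended frame is nondegenerate everywhere since the zero locus of $s_1\wedge\dots\wedge s_r\in H^0(X,\det E)$ is a divisor contained in $X\setminus U$, hence empty. Alternatively, letting $x_0$ vary one covers $X$ by such open sets; the parallel-transport description makes the local frames canonical, hence compatible on overlaps, so that they glue. Either way one obtains $E\cong\ox^{\,r}$, completing the argument.
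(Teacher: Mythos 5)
First, a point of comparison: the paper does not prove this proposition at all --- it is imported verbatim as a theorem of Biswas and dos Santos \cite{BD} --- so there is no internal proof to measure you against, and your attempt has to stand on its own as a proof of the Biswas--dos Santos theorem. The first half of your argument is sound and is indeed the standard mechanism: for a family $u\colon\pp^1\times M\to X$ of rational curves through $x_0$ with $u|_{\{0\}\times M}$ constant, the fibrewise triviality of $u^\ast E$ plus cohomology and base change gives $u^\ast E\cong p^\ast p_\ast(u^\ast E)\cong p^\ast(\os_M\otimes E_{x_0})$, hence a canonical frame of $u^\ast E$ extending a basis of $E_{x_0}$.

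The problem is that the step you yourself flag with ``granting this'' is not a technical verification but the entire content of the theorem, and as stated it does not go through. The fibre $F_x=u_\infty^{-1}(x)$ is an open subscheme of a Hom-scheme: it is neither proper nor obviously connected, and the connectedness of the space of (very free) rational curves joining $x_0$ to $x$ is a genuinely hard question, not a formal consequence of rational connectedness. If you compactify $M$ by stable maps to gain properness, the boundary parametrizes reducible curves (chains and combs); on such a curve the hypothesis only gives triviality of $E$ on each irreducible component, and the identifications $E_{x_0}\cong E_{x}$ obtained by composing trivializations across the nodes may a priori differ from chain to chain by a nontrivial automorphism of $E_{x_0}$. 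Asserting that $\sigma_i|_{F_x}$ is constant is therefore exactly the ``trivial monodromy of discrete parallel transport'' statement you are trying to prove, so the argument is circular at its crucial point; some additional input (a connectedness theorem for spaces of chains through two points, or a different device such as the one used in \cite{BD}) is required to close it. Two smaller issues compound this: the complement of the image $U$ of $u_\infty$ is merely closed, not automatically of codimension $\geq 2$, so the Hartogs extension needs a separate justification (e.g.\ enlarging the family so that every point of $X$ is reached, which again threatens irreducibility of $M$); and the alternative ``glue the local frames over overlaps'' route presupposes precisely the canonicity of the identification that is in question.
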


The following result is a strengthening of Theorem \ref{TeoBO} in the case of rationally connected varieties.

\begin{corollary}
Let $E\to X$ be a vector bundle over a rationally connected variety, such that for any morphism $f\colon \pp^1\to X$ the pull back is semistable; then $E\simeq \oplus_{i=1}^r L$ where $L$ is a line bundle on $X$. 
\end{corollary}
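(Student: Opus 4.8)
The plan is to test $E$ not directly but through its endomorphism bundle $\mathcal{E}nd(E)=E\otimes E^\vee$, so as to bring Proposition \ref{biswasdossantos} into play, and then to promote the resulting triviality of $\mathcal{E}nd(E)$ to an actual splitting of $E$ by an algebraic argument on its global endomorphism algebra.

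First I would fix a morphism $f\colon\pp^1\to X$. By hypothesis $f^\ast E$ is semistable on $\pp^1$; since every bundle on $\pp^1$ splits as a sum of line bundles and semistability forces these to have a common degree, we get $f^\ast E\simeq \os_{\pp^1}(a)^{\oplus r}$ for some integer $a=a(f)$. The key point is that, whatever $a(f)$ is,
\[
f^\ast\mathcal{E}nd(E)=\mathcal{E}nd(f^\ast E)\simeq \os_{\pp^1}^{\oplus r^2}
\]
is trivial. Thus $\mathcal{E}nd(E)$ pulls back to a trivial bundle under every morphism $\pp^1\to X$, and Proposition \ref{biswasdossantos} gives that $\mathcal{E}nd(E)$ is itself trivial on $X$.

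Next I would turn triviality of $\mathcal{E}nd(E)$ into the decomposition of $E$. Since $X$ is connected and projective, $H^0(\ox)=\cc$, so $\End(E)=H^0(X,\mathcal{E}nd(E))$ has dimension $r^2$; and because the trivial bundle $\mathcal{E}nd(E)\simeq\ox^{\,r^2}$ is globally generated, the fibrewise evaluation $\operatorname{ev}_x\colon \End(E)\to \End(E_x)\simeq\mathrm{Mat}_r(\cc)$ is surjective, hence bijective, for every $x\in X$. As $\operatorname{ev}_x$ is a unital algebra homomorphism, it is an isomorphism of $\cc$-algebras, so $\End(E)\simeq\mathrm{Mat}_r(\cc)$. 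Pulling the standard matrix units $u_{ij}$ back to $\End(E)$, the orthogonal idempotents $e_i:=u_{ii}$ satisfy $\sum_i e_i=\id_E$ and have constant rank one (their value at each $x$ is the rank-one matrix $u_{ii}$), so the images $L_i:=e_i(E)$ are line subbundles with $E=\bigoplus_{i=1}^r L_i$; the units $u_{ij}$ then restrict to mutually inverse isomorphisms $L_j\to L_i$, whence all $L_i$ are isomorphic to a single line bundle $L$ and $E\simeq L^{\oplus r}$.

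The main obstacle is this last passage, from the endomorphism bundle to $E$ itself. A priori, triviality of $\mathcal{E}nd(E)$ only says that $\pp(E)$ is projectively flat, and on a general base this would at best exhibit $E$ as an iterated extension of twists of trivial bundles (this is what Theorem \ref{TeoBO} alone would yield), not as a direct sum. What rescues the argument is that $\mathcal{E}nd(E)$ is trivial as a \emph{sheaf}, not merely that its projectivisation is flat: this is precisely the strength needed to realise the full matrix algebra $\mathrm{Mat}_r(\cc)$ inside $\End(E)$ and thereby extract the splitting idempotents. The delicate point to verify is that these idempotents have locally constant rank, which is guaranteed exactly by the fibrewise isomorphisms $\operatorname{ev}_x$ obtained above. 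Rational connectedness is used only once, through Proposition \ref{biswasdossantos}, to license testing $\mathcal{E}nd(E)$ against morphisms from $\pp^1$.
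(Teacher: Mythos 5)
Your proof is correct, and it fills in an argument the paper actually omits: the corollary is stated there without proof, as an immediate consequence of Proposition \ref{biswasdossantos}, and your route --- semistability on $\pp^1$ forces $f^\ast E\simeq\os_{\pp^1}(a)^{\oplus r}$, hence $f^\ast\mathcal{E}nd(E)$ is trivial for every $f$, hence $\mathcal{E}nd(E)$ is trivial by the proposition, hence $\End(E)\simeq\mathrm{Mat}_r(\cc)$ via the pointwise evaluation isomorphisms, hence $E$ splits via matrix-unit idempotents --- is exactly the standard intended derivation. The only imprecise phrase is the parenthetical ``their value at each $x$ is the rank-one matrix $u_{ii}$'': for $x$ other than the base point there is no canonical identification of $\End(E_x)$ with $\mathrm{Mat}_r(\cc)$, so you should instead note either that $\operatorname{tr}(e_i)$ is a global section of $\ox$, hence constant, and the trace of an idempotent equals its rank, or that the $r$ nonzero mutually orthogonal idempotents $\operatorname{ev}_x(e_i)$ summing to the identity of $\End(E_x)\simeq\mathrm{Mat}_r(\cc)$ must each have rank one; with that one-line repair the argument is complete.
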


\begin{theorem}\label{theoremrc} 
Let $\E=(E,\phi)$ be a semistable Higgs vector bundle on a rationally connected variety $X$. If for any morphism $f \colon C \to X$, where $C$ is a smooth  projective curve, the Higgs bundle $f^\ast\E$ is semistable, then $\Delta(E)=0$.
\end{theorem}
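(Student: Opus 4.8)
The plan is to reduce the statement to the purely vector-bundle setting treated by Biswas and Dos Santos, by exploiting the fact that a rationally connected variety carries enough rational curves and that along a rational curve the Higgs field becomes harmless.

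First I would restrict the hypothesis to morphisms $f\colon\pp^1\to X$. For each such $f$, the pullback $f^\ast\E=(f^\ast E, f^\ast\phi)$ is a semistable Higgs bundle on $\pp^1$, its Higgs field being the composite of $f^\ast\phi$ with the codifferential $f^\ast\Omega^1_X\to\Omega^1_{\pp^1}$ and hence valued in $\Omega^1_{\pp^1}=\os_{\pp^1}(-2)$. The key point is that $\Omega^1_{\pp^1}$ is a line bundle of negative degree, so it is a $\mu$-semistable vector bundle of nonpositive degree. Therefore Corollary \ref{cor} (equivalently Theorem \ref{TeoHs} with $M=\Omega^1_{\pp^1}$) applies over $\pp^1$ and shows that the underlying bundle $f^\ast E$ is $\mu$-semistable, for every morphism $f\colon\pp^1\to X$.

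Having made the Higgs field inessential, I would then invoke the Corollary following Proposition \ref{biswasdossantos}: a vector bundle on a rationally connected variety whose restriction to every rational curve is semistable must split as $E\simeq\bigoplus_{i=1}^r L$ for a single line bundle $L$. For such a bundle $c_1(E)=r\,c_1(L)$ and $c_2(E)=\binom{r}{2}c_1(L)^2$, whence $\Delta(E)=\binom{r}{2}c_1(L)^2-\frac{r-1}{2r}\,r^2\,c_1(L)^2=0$, which is the desired conclusion.

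I expect the only delicate point to be the reduction in the second paragraph: one must take care that $f^\ast\E$ is treated as a genuine Higgs bundle on $\pp^1$, with field valued in $\Omega^1_{\pp^1}$ rather than in $f^\ast\Omega^1_X$, so that the negativity of $\deg\Omega^1_{\pp^1}$ can be exploited via Corollary \ref{cor}. Once this is settled the argument is essentially formal. It is worth noting that only the restriction hypothesis is actually used --- the assumed global semistability of $\E$ on $X$ does not enter --- and that it is enough to test semistability on rational curves, which is precisely what rational connectedness provides.
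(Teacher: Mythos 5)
Your proof is correct and follows essentially the same route as the paper's: restrict to rational curves, use Theorem \ref{TeoHs} (the cotangent bundle of $\pp^1$ having negative degree) to pass from Higgs semistability to ordinary semistability of $f^\ast E$, invoke the corollary of the Biswas--Dos Santos result to get $E\simeq\bigoplus L$, and conclude $\Delta(E)=0$. Your version merely spells out the final Chern-class computation and the point about the Higgs field being valued in $\Omega^1_{\pp^1}$, both of which the paper leaves implicit.
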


\begin{proof}
Let us consider a morphism $f\colon C\to X$ where $C$ is a rational projective curve. As by hypothesis the pullback $f^\ast\E$ is semistable as a Higgs bundle, by applying Theorem \ref{TeoHs} we get that $f^\ast E$ is semistable as a vector bundle. By the previous corollary   $E\simeq \oplus L$ and this implies   $\Delta(E)=0$.  
\end{proof}

So rationally connected varieties are Higgs varieties.

\subsection{Abelian varieties}\label{sectionAbelian}

Let $X$ be an Abelian variety. Since the tangent bundle of $X$ is trivial, its pullback via  any morphism remains trivial, hence semistable.

Let $(E,\phi)$ be a semistable Higgs bundle over $X$ and fix an ample line bundle $H$. Since $c_1(T_X)=0$, and the tangent bundle is semistable, by Theorem \ref{TeoHs} $E$ is semistable as a vector bundle. Now we want to show that if for any morphism $f\colon C \to X$ the Higgs bundle $f^\ast (E,\phi)$ is semistable  then $\Delta(E)=0$, i.e.,  $X$ is a Higgs variety.
 Let us recall that the Higgs field on $f^\ast E$ is defined by $f^\ast\phi\colon f^\ast E\to f^\ast E\otimes K_C$ composing the pull-back with the projection $f^\ast\colon \Omega^1_X\to K_C$ induced by $f$. 
For any such morphism $f$ one can consider the pair $(f^\ast E,\phi')$ where 
\[
\phi'\colon f^\ast E\to f^\ast E\otimes f^*\Omega^1_X.
\]
Clearly a $\phi'$-invariant subbundle $F\subset E$ is also $f^\ast\phi$-invariant. In particular we have

\begin{lemma}\label{lemmaAbelian}
If the Higgs bundle $(f^\ast E,f^\ast\phi)$ is semistable,  so is the pair $(f^\ast E,\phi')$.
\end{lemma}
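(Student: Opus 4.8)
The plan is to argue directly at the level of invariant subsheaves, exploiting the fact that the two structures on $f^*E$ are related through a projection of their coefficient bundles. Recall from the construction above that $f^*\phi$ factors through $\phi'$: if $p\colon f^*\Omega^1_X\to K_C$ denotes the projection induced by $f$, then $f^*\phi=(\id_{f^*E}\otimes\,p)\circ\phi'$. This factorization is the only ingredient the argument really needs.

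First I would record its immediate consequence for invariant subsheaves. Suppose $F\subset f^*E$ is $\phi'$-invariant, i.e. $\phi'(F)\subset F\otimes f^*\Omega^1_X$. Applying $\id\otimes p$ gives $f^*\phi(F)=(\id\otimes p)\bigl(\phi'(F)\bigr)\subset F\otimes K_C$, so $F$ is also $f^*\phi$-invariant. Hence the collection of $\phi'$-invariant subsheaves of $f^*E$ is contained in the collection of $f^*\phi$-invariant ones.

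Next I would transfer the slope inequality across this inclusion. By hypothesis $(f^*E,f^*\phi)$ is semistable, so every $f^*\phi$-invariant subsheaf $G$ with $0<\rk G<\rk(f^*E)$ satisfies $\mu(G)\le\mu(f^*E)$ --- on the curve $C$ the Gieseker and $\mu$ notions coincide, so it suffices to work with slopes. In particular the inequality holds for every $\phi'$-invariant $F$, since each such $F$ belongs to the larger family. This is exactly the defining condition for $(f^*E,\phi')$ to be a semistable pair.

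I do not expect a real obstacle: once the identity $f^*\phi=(\id\otimes p)\circ\phi'$ is made explicit, the statement is a one-line observation. The only point meriting care is the direction of the inclusion of invariant families --- it is essential that $\phi'$-invariance is the \emph{stronger} condition, so that passing from $f^*\phi$ to $\phi'$ shrinks the set of test subsheaves and lets the semistability inequality be inherited; the reverse implication would not follow by this reasoning.
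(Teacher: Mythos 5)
Your argument is correct and coincides with the paper's: the paper likewise notes just before the lemma that a $\phi'$-invariant subsheaf is automatically $f^*\phi$-invariant, and then transfers the slope inequality from the semistable Higgs bundle $(f^*E,f^*\phi)$ to the pair $(f^*E,\phi')$. Your explicit factorization $f^*\phi=(\id\otimes p)\circ\phi'$ just makes the paper's ``clearly'' precise.
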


\begin{proof}
Let $F$ be a $\phi'$-invariant subbundle of $f^*E$, then since it is $f^\ast\phi$-invariant and the Higgs bundle $(f^\ast E,f^\ast\phi)$ is semistable, we have 
\[
\mu(F)\leq \mu(f^\ast E),
\]
and so  $(f^\ast E,\phi')$ is semistable as a pair.
\end{proof}

\begin{corollary}\label{abelianHiggs}
An Abelian variety $X$ is a Higgs variety. 
\end{corollary}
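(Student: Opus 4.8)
The plan is to deduce $\Delta(E)=0$ from the characterization in Theorem~\ref{TeoBO}, for which it suffices to show that $f^\ast E$ is $\mu$-semistable as a vector bundle for \emph{every} morphism $f\colon C\to X$ from a smooth projective curve $C$. We already know from Theorem~\ref{TeoHs}, applied with $M=\Omega^1_X$ (which is trivial on $X$, hence $\mu$-semistable of degree zero), that $E$ itself is $\mu$-semistable; so the only thing left is to control the restrictions to curves.

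Fix such an $f$. The difficulty is that the pullback Higgs bundle $(f^\ast E,f^\ast\phi)$ has its field valued in $K_C$, and $\deg K_C$ may be strictly positive when the genus of $C$ exceeds one, so Theorem~\ref{TeoHs} cannot be applied to it directly. To get around this I would pass instead to the pair $(f^\ast E,\phi')$ with $\phi'\colon f^\ast E\to f^\ast E\otimes f^\ast\Omega^1_X$, that is, I would \emph{not} compose the pulled-back field with the projection $f^\ast\Omega^1_X\to K_C$. By Lemma~\ref{lemmaAbelian}, the semistability of $(f^\ast E,f^\ast\phi)$ --- which holds by hypothesis --- implies that this pair $(f^\ast E,\phi')$ is semistable.

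Now the crucial point is that $f^\ast\Omega^1_X$ is trivial, because $\Omega^1_X$ is trivial on an Abelian variety; in particular it is a $\mu$-semistable bundle of degree zero, hence of nonpositive degree. Thus Theorem~\ref{TeoHs} applies to the semistable pair $(f^\ast E,\phi')$ and yields that $f^\ast E$ is $\mu$-semistable as a vector bundle. Since $f$ was arbitrary, condition~(1) of Theorem~\ref{TeoBO} holds for $E$, and by the equivalence with condition~(2) we conclude that $\Delta(E)=0$, which is exactly the assertion that $X$ is a Higgs variety.

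I expect the only genuine subtlety to be the degree bookkeeping just described: recognizing that one must replace the $K_C$-valued field by the $f^\ast\Omega^1_X$-valued one so that the target bundle has nonpositive (indeed zero) degree, which is precisely what makes Theorem~\ref{TeoHs} usable. Everything else is a direct chaining of the three quoted results, namely Lemma~\ref{lemmaAbelian}, Theorem~\ref{TeoHs} and Theorem~\ref{TeoBO}.
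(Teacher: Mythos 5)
Your proposal is correct and follows essentially the same route as the paper: pass from the $K_C$-valued pulled-back Higgs field to the $f^\ast\Omega^1_X$-valued pair via Lemma~\ref{lemmaAbelian}, invoke Theorem~\ref{TeoHs} using the triviality (hence semistability and degree zero) of $f^\ast\Omega^1_X$ to get semistability of $f^\ast E$ for every $f$, and conclude $\Delta(E)=0$ from Theorem~\ref{TeoBO}. The only difference is cosmetic: your preliminary observation that $E$ itself is $\mu$-semistable is not needed for the conclusion, since Definition~\ref{defHiggsvariety} only asks for $\Delta(E)=0$.
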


\begin{proof}
Let $\E=(E,\phi)$ be a Higgs bundle and $f\colon C\to X$ be a morphism from a smooth projective curve. Assume that the Higgs bundle $(f^\ast E,f^\ast\phi)$ is semistable; then the pair $(f^\ast E,\phi')$ is semistable. Since $f^*(\Omega^1_X)$ is semistable of degree zero, one can apply Theorem \ref{TeoHs} and conclude that $f^\ast E$ is semistable. Hence by Theorem \ref{TeoBO} one gets $\Delta(E)=0$.
\end{proof}

The condition about the semistability of the tangent bundle holds also with the weaker assumption that $X$ is a quasi-Abelian variety, moreover P. Jahnke and I. Radloff (\cite{JR}) proved that quasi-Abelian varieties are the only ones which satisfy the condition that the pull-back of tangent bundle is semistable for any morphism from a smooth projective curve. So Corollary \ref{abelianHiggs} holds also for quasi-Abelian variety; however we will prove this fact in a more general context in Proposition \ref{propositionfinitequot}.

\subsection{More Higgs varieties}\label{moreHiggs}

We want to produce new examples of Higgs varieties starting from those we have so far discussed. The first technique is to use the Lefschetz hyperplane theorem.

\begin{proposition}
Let $(X,H)$ be a smooth polarized projective variety with $\dim X=n\geq 5$ and let $D$ be a smooth effective ample divisor in $X$. If $D$ is a Higgs variety, then   $X$ is a Higgs variety as well.
\end{proposition}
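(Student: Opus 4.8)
The plan is to restrict the Higgs bundle to $D$, use that $D$ is a Higgs variety to kill the discriminant there, and then transport the vanishing back to $X$ via the Lefschetz hyperplane theorem. So let $\E=(E,\phi)$ be a Higgs bundle on $X$ such that $f^\ast\E$ is semistable as a Higgs bundle for every morphism $f\colon C\to X$ from a smooth projective curve; I must show $\Delta(E)=0$ in $H^4(X,\rr)$. Write $\iota\colon D\hookrightarrow X$ for the inclusion. First I would build a Higgs bundle on $D$ out of $\E$: using the conormal sequence $0\to N^\ast_{D/X}\to \iota^\ast\Omega^1_X\to \Omega^1_D\to 0$, compose the restricted field $\iota^\ast\phi\colon \iota^\ast E\to \iota^\ast E\otimes \iota^\ast\Omega^1_X$ with the surjection $\iota^\ast\Omega^1_X\twoheadrightarrow \Omega^1_D$ to obtain $\phi_D\colon E|_D\to E|_D\otimes\Omega^1_D$. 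The integrability $\phi\wedge\phi=0$ descends through $\wedge^2(\iota^\ast\Omega^1_X\to\Omega^1_D)$, giving $\phi_D\wedge\phi_D=0$, so $\E|_D:=(E|_D,\phi_D)$ is a genuine Higgs bundle on $D$.

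Next I would check that $\E|_D$ inherits the curve-semistability hypothesis. Given any $g\colon C\to D$, set $f=\iota\circ g$. Since the codifferential $f^\ast\Omega^1_X\to K_C$ factors as $f^\ast\Omega^1_X=g^\ast\iota^\ast\Omega^1_X\to g^\ast\Omega^1_D\to K_C$, tracing through the definition of the induced Higgs field on a curve shows that the pullback Higgs bundle $g^\ast(\E|_D)$ on $C$ coincides with $f^\ast\E$. By hypothesis $f^\ast\E$ is semistable, hence so is $g^\ast(\E|_D)$ for every $g\colon C\to D$. Because $\dim D=n-1\ge 4>1$ and $D$ is a Higgs variety, this forces $\Delta(E|_D)=0$ in $H^4(D,\rr)$.

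Finally, Chern classes commute with restriction, so $\Delta(E|_D)=\iota^\ast\Delta(E)$, whence $\iota^\ast\Delta(E)=0$. Here the Lefschetz hyperplane theorem intervenes: since $D$ is a smooth ample divisor and $\dim X=n\ge 5$, the restriction map $\iota^\ast\colon H^4(X,\rr)\to H^4(D,\rr)$ is injective, being an isomorphism in degrees $\le n-2$ and injective in degree $n-1$, and $4\le n-1$ holds exactly when $n\ge 5$. Injectivity then yields $\Delta(E)=0$ in $H^4(X,\rr)$, so $X$ is a Higgs variety.

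I expect the main obstacle to be the step in the second paragraph: verifying both that the restricted field $\phi_D$ is integrable and, more delicately, that $g^\ast(\E|_D)$ is genuinely isomorphic to $f^\ast\E$ as a Higgs bundle on $C$, since it is precisely this identification that legitimately transports the curve-semistability hypothesis from $X$ down to $D$. The numerical hypothesis $n\ge 5$ is not incidental but is exactly what the Lefschetz theorem requires to guarantee injectivity of $\iota^\ast$ on $H^4$; for $n\le 4$ the argument would break at the last step.
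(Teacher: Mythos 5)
Your proof is correct and follows essentially the same route as the paper: restrict to $D$, invoke the Higgs variety property of $D$ to get $\Delta(E|_D)=0$, and conclude via injectivity of $H^4(X)\to H^4(D)$ from the Lefschetz hyperplane theorem (the paper first normalizes by passing to $\End(E)$ so that $\Delta=c_2$, but as you note this is unnecessary since $\Delta(E|_D)=\iota^\ast\Delta(E)$ directly). In fact you are more careful than the paper on the one genuinely delicate point, namely constructing the induced Higgs field $\phi_D$ via the conormal sequence and checking $g^\ast(\E|_D)\cong(\iota\circ g)^\ast\E$ as Higgs bundles on $C$, which the paper leaves implicit.
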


\begin{proof}
Let $\mathcal E$ be a Higgs vector bundle on $X$ such that for any $f\colon C\to X$, $f^\ast \mathcal E$ is semistable as Higgs bundle. Replacing $E$ by $\End(E)$ we can assume   $c_1(E)=0$. Let us consider the vector bundle $E|_D$. For any morphism $g\colon C\to D$, the pullback of $E|_D$ is  semistable. Since $D$ is a Higgs variety we have $\Delta(E|_D)=c_2(E|_D)=0$. As by the Lefschetz theorem the morphism 
\[
H^i(X,\cc) \to H^i(D,\cc)  
\]
is injective for $i\ \le n-1$, we have $\Delta(E)=c_2(E)=0$, and so $X$ is a Higgs variety.
\end{proof}

\begin{remark}
Proposition  \ref{ampledivisor} tells us that if $X$ has a rationally connected ample divisor then $X$ is rationally connected so in this case the previous result does not give anything new.
\end{remark}

\begin{proposition}\label{corollaryrcfibration}
Let $X$ be a Higgs variety, and assume that there is a surjective morphism $g\colon Y\to X$ such that each fibre $Y_x$ is rationally connected. Then $Y$ is a Higgs variety.
\end{proposition}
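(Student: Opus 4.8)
The plan is to reduce the statement on $Y$ to the Higgs variety property of $X$ by descending a suitable Higgs bundle along $g$. Let $\E=(E,\phi)$ be a Higgs bundle on $Y$ such that $f^\ast\E$ is semistable for every $f\colon C\to Y$ from a smooth projective curve; I must show $\Delta(E)=0$. First I would pass to the endomorphism bundle: set $V=\End E$ with its induced Higgs field $\operatorname{ad}\phi$. Since $\Delta(\End E)=2r\,\Delta(E)$, it is enough to prove $\Delta(V)=0$, and $V$ has $c_1(V)=0$. The point of replacing $E$ by $V$ is that $V$ becomes trivial on the fibres: for each $x$, every curve in the fibre $Y_x$ maps to $Y$, so $\E|_{Y_x}$ has semistable pullback to all curves, and since $Y_x$ is rationally connected the argument behind Theorem \ref{theoremrc} (together with the preceding corollary) gives $E|_{Y_x}\simeq\bigoplus^r L_x$ for a single line bundle $L_x$; hence $V|_{Y_x}=\End(E|_{Y_x})\simeq\os_{Y_x}^{\oplus r^2}$ is trivial.

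Next I would descend both $V$ and its Higgs field to $X$. Because $V|_{Y_x}$ is trivial for every $x$ and the fibres are connected with $g_\ast\os_Y=\os_X$, cohomology and base change shows that $W:=g_\ast V$ is locally free of rank $r^2$ and that the evaluation map $g^\ast W\to V$ is an isomorphism; thus $V\simeq g^\ast W$. For the field, I first observe that the relative component of $\operatorname{ad}\phi$, namely the composite $V\to V\otimes\Omega^1_Y\to V\otimes\Omega^1_{Y/X}$, restricts on each $Y_x$ to an element of $\Hom(V|_{Y_x},V|_{Y_x}\otimes\Omega^1_{Y_x})\cong H^0(Y_x,\Omega^1_{Y_x})^{\oplus r^4}=0$, since a rationally connected variety carries no nonzero holomorphic $1$-forms. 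As this relative component vanishes on all fibres it vanishes identically, so $\operatorname{ad}\phi$ factors through $V\otimes g^\ast\Omega^1_X$. Applying $g_\ast$ and the projection formula $g_\ast g^\ast(-)=(-)$ then produces a morphism $\theta\colon W\to W\otimes\Omega^1_X$ whose pullback recovers $\operatorname{ad}\phi$; integrability $\theta\wedge\theta=0$ follows from that of $\operatorname{ad}\phi$ together with the injectivity of $g^\ast$. Hence $(W,\theta)$ is a Higgs bundle on $X$.

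It remains to check that $(W,\theta)$ satisfies the hypothesis of the Higgs variety property, after which $X$ being a Higgs variety gives $\Delta(W)=0$ and therefore $0=g^\ast\Delta(W)=\Delta(g^\ast W)=\Delta(V)=2r\,\Delta(E)$, as desired. Given any $h\colon C\to X$ with $C$ a smooth projective curve, I would lift it to $Y$: the base-changed family $Y\times_X C\to C$ has rationally connected fibres over a curve, so by the theorem of Graber, Harris and Starr it admits a section, yielding $\tilde h\colon C\to Y$ with $g\circ\tilde h=h$. Then, identifying Higgs fields through $g^\ast\Omega^1_X\to\Omega^1_Y$ and the cotangent map of $h$, one gets $h^\ast(W,\theta)\simeq\tilde h^\ast(\End E,\operatorname{ad}\phi)\simeq\mathcal{E}nd(\tilde h^\ast\E)$, which is semistable because $\tilde h^\ast\E$ is semistable by hypothesis and $\mathcal{E}nd$ preserves Higgs semistability on a curve. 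I expect the main obstacles to be the two descent points: verifying via cohomology and base change that $V$ actually descends to a bundle $W=g_\ast V$ on $X$ (where some flatness and equidimensionality care for $g$ is needed), and the lifting step, where the Graber--Harris--Starr section is exactly what transfers the ``all curves'' condition from $Y$ back to $X$.
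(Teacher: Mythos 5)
Your proof is correct and follows the same skeleton as the paper's: reduce to $c_1=0$ by passing to the endomorphism bundle, use the corollary of Biswas--Dos Santos to trivialize the bundle on the fibres and kill the relative part of the Higgs field (no holomorphic $1$-forms on a rationally connected variety), descend to a Higgs bundle on $X$, transfer the ``semistable on all curves'' hypothesis down to $X$, and conclude from the Higgs variety property of $X$ together with $g^\ast\Delta(W)=\Delta(E\otimes E^\vee)=2r\,\Delta(E)$. The one step where you genuinely diverge is the transfer of the hypothesis: the paper forms the fibre product $C\times_X Y$, argues that $\bar f^\ast\E$ is semistable there because its pullback to every curve is, and then descends semistability along $\bar g\colon C\times_X Y\to C$; you instead invoke Graber--Harris--Starr to produce a section of $Y\times_X C\to C$, i.e.\ a lift $\tilde h\colon C\to Y$ of $h$, so that $h^\ast(W,\theta)\simeq \End(\tilde h^\ast\E)$ is semistable directly from the hypothesis (using that tensor products of semistable Higgs bundles on a curve are semistable). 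Your route buys a cleaner and more self-contained final step --- it avoids both the somewhat delicate claim that ``semistable on every curve implies semistable'' on the possibly singular total space $C\times_X Y$ and the descent of semistability along $\bar g$ --- at the price of importing the (deep) GHS theorem; the paper's route stays within the elementary toolkit already set up in Section 2. You are also more explicit than the paper about the descent $V\simeq g^\ast W$ via cohomology and base change and about why the Higgs field factors through $V\otimes g^\ast\Omega^1_X$; the paper simply asserts $\E=g^\ast\F$, so your added care (flatness/equidimensionality of $g$) addresses a point the paper leaves implicit rather than introducing a new difficulty.
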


\begin{proof}
Let $\mathcal E=(E,\phi)$ be a Higgs bundle on $Y$ such that $f'^*\mathcal E$ is Higgs semistable for any $f'\colon C'\to Y$. Replacing $\mathcal E$ by $\mathcal E\otimes \mathcal E^\vee$ we can assume that $c_1(E)=0$. Let $h\colon\mathbb P^1\to Y_x$ be a morphism. By hypothesis, $h^\ast \mathcal E$ is semistable. Then it is not hard to see, by inspecting the Harder-Narasimhan filtration of the underlying bundle $h^\ast E$, that the latter is semistable in the usual sense, hence it is trivial. Then by
 Proposition \ref{biswasdossantos}, $E$ is trivial on the fibres $Y_x$, and so the Higgs field of $\mathcal E$ on a fibre  $Y_x$ is a collection of global holomorphic 1-forms on $Y_x$. As this variety is rationally connected,
 there are no such holomorphic forms, and therefore the restriction of $\mathcal E$ on each fibre is the trivial Higgs bundle. Then  we have    $\mathcal E=g^* \mathcal{F}$ for some  Higgs vector bundle $\mathcal F=(F,\varphi)$ on $X$.  
 
 Let $f\colon C\to X$ be a morphism with $C$ a smooth projective curve. We have the following commutative diagram
\begin{equation}\label{diag1}
\xymatrix{ C \times_X Y  \ar[d]_{\bar{g}}\ar[r]^{\bar{f}} & Y \ar[d]^g \\
C \ar[r]^(0.5){f} & X \ .
}
\end{equation}
We claim that $\bar{f}^*\mathcal E$ is semistable. Indeed, let $\tilde f \colon \tilde C \to C \times_X Y$ be any morphism,
with $\tilde C$ a smooth projective curve. Then $\tilde f^\ast (\bar f^\ast \mathcal E) = (\bar f\circ\tilde f)^\ast \mathcal E$ is
semistable. Since this is true for any morphism $\tilde f \colon \tilde C \to C \times_X Y$, it follows that $\bar{f}^*\mathcal E$ is semistable.

Now, $\bar{f}^*\mathcal E=(g\circ\bar{f})^*(\mathcal F)$, and  $g\circ \bar{f}=f\circ\bar{g}$, so that  $(f\circ\bar{g})^*(\mathcal F)$ is semistable as a Higgs bundle. Hence also $f^*(\mathcal F)$ is. Since $X$ is a Higgs variety we get $\Delta(F)=0$ which clearly implies $\Delta(g^*(F))=\Delta(E)=0$ and $Y$ is a Higgs variety.  \end{proof}

In particular the previous Proposition  implies that ruled surfaces are Higgs varieties.

\begin{proposition}\label{propositionfinitequot}
Let $g\colon Y\to X$ be a finite \'etale map between smooth projective varieties. If $Y$ is a Higgs variety then also $X$ is so.  
\end{proposition}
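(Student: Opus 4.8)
The plan is to show that $X$ is a Higgs variety by pulling back any problematic Higgs bundle on $X$ to $Y$, applying the hypothesis that $Y$ is a Higgs variety, and then descending the vanishing of the discriminant back to $X$ using that $g$ is finite \'etale. The key point is that the statement ``$f^\ast\mathcal E$ is semistable for every curve $C\to X$'' is robust under pullback along $g$, and that the discriminant class behaves multiplicatively under finite morphisms.

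First I would take a Higgs bundle $\mathcal E=(E,\phi)$ on $X$ such that $f^\ast\mathcal E$ is semistable as a Higgs bundle for every morphism $f\colon C\to X$ from a smooth projective curve, and set $\mathcal F=g^\ast\mathcal E$, a Higgs bundle on $Y$ (the Higgs field pulls back since $g$ is \'etale, so $g^\ast\Omega^1_X\cong\Omega^1_Y$). I would then verify that $\mathcal F$ satisfies the analogous hypothesis on $Y$: given any morphism $h\colon C'\to Y$ from a smooth projective curve, the composite $g\circ h\colon C'\to X$ is again a morphism from a smooth projective curve, and $h^\ast\mathcal F=h^\ast g^\ast\mathcal E=(g\circ h)^\ast\mathcal E$ is semistable by the hypothesis on $\mathcal E$. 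Since $Y$ is a Higgs variety, this gives $\Delta(g^\ast E)=g^\ast\Delta(E)=0$ in $H^4(Y,\rr)$.

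The remaining step is to descend $g^\ast\Delta(E)=0$ to $\Delta(E)=0$ on $X$, and here the finite \'etale hypothesis is essential. Because $g$ is finite of some degree $d$, the composition $g_\ast g^\ast$ acts as multiplication by $d$ on $H^4(X,\rr)$ (equivalently one can use the projection formula together with $g_\ast g^\ast\alpha=d\,\alpha$); since $g^\ast\Delta(E)=0$ we obtain $g_\ast g^\ast\Delta(E)=d\,\Delta(E)=0$, and as $d>0$ this forces $\Delta(E)=0$ in $H^4(X,\rr)$. Hence $X$ is a Higgs variety.

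The main obstacle is the descent of the cohomology class in the last step: one must be careful that $g^\ast$ is injective on the relevant real cohomology, which for a finite \'etale cover follows from the existence of the Gysin pushforward $g_\ast$ and the identity $g_\ast g^\ast=\deg(g)\cdot\id$. This is a standard fact but needs the \'etale (or at least finite, generically unramified) assumption to ensure $\deg(g)>0$ and to guarantee that $g^\ast\Delta(E)$ is genuinely the discriminant of $g^\ast E$, which in turn uses $g^\ast c_i(E)=c_i(g^\ast E)$ and the functoriality of Chern classes. One should also confirm at the outset that $g^\ast\mathcal E$ really is a Higgs bundle, i.e. that the integrability condition $\phi\wedge\phi=0$ is preserved under pullback, which is immediate from the isomorphism $g^\ast\Omega^\bullet_X\cong\Omega^\bullet_Y$ coming from \'etaleness.
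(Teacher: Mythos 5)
Your proof is correct and follows essentially the same route as the paper: pull back along $g$, reduce the curve-restriction hypothesis on $Y$ to that on $X$ via composition $g\circ h$, invoke that $Y$ is a Higgs variety to get $\Delta(g^\ast E)=0$, and descend by injectivity of $g^\ast$ on cohomology. You simply spell out the injectivity via $g_\ast g^\ast=\deg(g)\cdot\id$, which the paper leaves implicit.
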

\begin{proof}
Let $\E=(E,\phi)$ be a Higgs vector bundle on $X$ such that for any morphism $f\colon C\to X$ from a smooth projective curve $C$ the pullback $f^\ast \E$ is semistable as a Higgs bundle.
Let $h\colon C'\to Y$ be any morphism, we can consider the composition $g\circ h\colon C'\to X$ and so we get $h^*(g^*\E)=(g\circ h)^*\E$ is semistable as Higgs bundle, hence, since $Y$ is a Higgs variety, $\Delta(g^*E)=0$. Our hypothesis on $g$ tells us that the morphism $g^*$ is injective in cohomology;  in particular $\Delta(E)=0$ and we are done.
\end{proof}

\begin{proposition}
Let $X$ and $Y$ be smooth surfaces and $g\colon X\to Y$ be a birational map which is an isomorphism between big open subsets of $X$ and $Y$. Then $X$ is a Higgs variety if and only if $Y$ is so. 
\end{proposition}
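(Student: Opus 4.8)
The plan is to exploit the symmetry of the statement and prove only the implication ``$Y$ a Higgs variety $\Rightarrow$ $X$ a Higgs variety'', the reverse following by interchanging the roles of $X$ and $Y$. Write $U\subseteq X$ and $V\subseteq Y$ for the big open subsets identified by $g$; since $X$ and $Y$ are surfaces, their complements are finite sets of points. The strategy is to transport a Higgs bundle on $X$ satisfying the curve-pullback hypothesis across $g$ to a Higgs bundle on $Y$, invoke the Higgs-variety property of $Y$ to kill its discriminant, and then transfer the vanishing back to $X$.

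Concretely, let $\E=(E,\phi)$ be a Higgs bundle on $X$ such that $f^\ast\E$ is Higgs semistable for every morphism $f\colon C\to X$ from a smooth projective curve. I would first transport $E|_U$ to $V$ via the isomorphism $g|_U$ and extend it across the finite complement $Y\setminus V$ by taking the reflexive hull of its direct image. Because $Y$ is a smooth surface, reflexive sheaves are locally free, so this produces an honest vector bundle $F$ on $Y$; the transported Higgs field, being a section of the reflexive sheaf $\End(F)\otimes\Omega^1_Y$ defined away from a codimension-two locus, extends by Hartogs to a field $\psi$, and the integrability $\psi\wedge\psi=0$ persists from the dense open $V$. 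This yields a Higgs bundle $\F=(F,\psi)$ on $Y$ with $\F|_V\cong(g|_U)_\ast(\E|_U)$.

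Next I would check that $\F$ satisfies the curve-pullback hypothesis on $Y$. For any $h\colon C\to Y$ with $C$ a smooth projective curve, the composite $g^{-1}\circ h$ is a rational map from a smooth curve to the projective variety $X$, hence extends to a genuine morphism $\tilde h\colon C\to X$; by hypothesis $\tilde h^\ast\E$ is semistable, and since $h^\ast\F$ and $\tilde h^\ast\E$ agree on the cofinite open set $h^{-1}(V)$, one deduces that $h^\ast\F$ is semistable as well. Invoking the Higgs-variety property of $Y$ then gives $\Delta(F)=0$ in $H^4(Y,\rr)$. Finally I would transfer this back to $X$: since $E$ and $F$ are identified over the big open subsets $U\cong V$ whose complements have codimension two, their second Chern classes and squared first Chern classes coincide, so $\Delta(E)=0$ in $H^4(X,\rr)$ and $X$ is a Higgs variety.

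I expect the main obstacle to be the matching of the discriminants across the birational locus, together with the transfer of semistability over the finitely many exceptional points in the previous step: a priori $h^\ast\F$ and $\tilde h^\ast\E$ are two extensions of the same bundle over a finite set and could differ by elementary modifications. The surface hypothesis is what resolves both difficulties, since it forces $g$ to be a biregular isomorphism---indeed a birational map of smooth projective surfaces restricting to an isomorphism of big open subsets can have no point of indeterminacy, for otherwise its inverse would contract a curve into the locus where it is an isomorphism, violating injectivity; hence $g$ and $g^{-1}$ are morphisms and $g$ is an isomorphism. Once this is in hand the comparison of Chern classes is immediate via $g^\ast$ and the extension step becomes superfluous, so I would organize the final write-up around this isomorphism statement.
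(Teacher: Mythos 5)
Your proposal is correct, and its final form takes a genuinely different route from the paper. The paper argues exactly as in your middle paragraphs: it transports the Higgs bundle across $g$ over the big open subsets, extends it (uniquely, by reflexivity) to the whole surface, sets up a bijection between $\operatorname{Mor}(C,Y)$ and $\operatorname{Mor}(C,X)$ using the fact that a rational map from a smooth curve to a projective variety is a morphism, and then transfers the vanishing of the discriminant. Your closing observation cuts through all of this: for \emph{smooth projective surfaces}, a birational map that restricts to an isomorphism between big open subsets (whose complements are finite) has no point of indeterminacy --- otherwise its inverse would contract a curve, and that curve meets the big open subset on which the inverse is injective --- so $g$ and $g^{-1}$ are both morphisms and $g$ is biregular. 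The proposition then reduces to the tautology that the Higgs-variety property is an isomorphism invariant. This is cleaner and, importantly, it closes a real gap that you correctly flagged and that the paper's own proof leaves open: the identity $f^\ast\E=(g^{-1}\circ f)^\ast(g^\ast\E)$ is only evident over the cofinite subset $f^{-1}(V)\subset C$, and two bundles on a curve agreeing on a cofinite set can differ by elementary modifications (the complement of a finite set of points is not ``big'' on a curve, so uniqueness of reflexive extensions does not apply there). The paper's extension argument is the one that would be needed if the statement were posed in a setting where $g$ is genuinely not biregular (flops in higher dimension, say), but as literally stated for smooth surfaces your reduction is the right way to write it up; if you do so, state explicitly the surface fact you are using (a birational map of smooth projective surfaces undefined at $p$ forces $g^{-1}$ to contract a curve to $p$), since that is the only nontrivial input.
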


\begin{proof}
Let $\E=(E,\phi)$ be a Higgs bundle on $Y$. Then $g^\ast E$ is a vector bundle on a big open subset $U$ of $X$, hence it can be extended uniquely to a vector bundle on all $X$ (``big'' means that the complement of $U$ has codimension at least 2). Moreover $g$ induces a one-to-one correspondence between $Mor(C,Y)$ and $Mor(C,X)$, where $C$ is a smooth curve, given by 
\[
(f\colon C\to Y) \longmapsto (g^{-1}\circ f\colon C\to X), 
\]
since birational maps between curves are actually isomorphisms we get that the map $g^{-1}\circ f$ is actually a  morphism. Thus  $f^\ast \E=(g^{-1}\circ f)^*(g^*(\E))$. Since $X$ is a Higgs variety we obtain $\Delta(\phi^*E)=0$ hence $\Delta(E)=0$ and $Y$ is a Higgs variety.   
\end{proof}

\subsection{Varieties with nef tangent bundle}

Let us observe that all varieties we so far described in this section have nef tangent bundle. Indeed one can
 show that varieties with nef tangent bundle are Higgs varieties. 
The main theorem we use is the following result   by Demailly, Peternell and Schneider \cite{DPS}.

\begin{theorem}\label{theoremDPS}
Let $X$ be a compact K\"ahler manifold with nef tangent bundle $T_X$. Let $\xtilde$ be a finite \'etale cover of $X$ of maximum irregularity $q =q(\xtilde) = h^1(\xtilde ,\oxtilde)$. Then
\begin{enumerate}
\item $\pi_1(\xtilde) \simeq \zz^{2q}.$

\item The Albanese map $\alpha \colon \xtilde \to A(\xtilde)$ is a smooth fibration over a $q$-dimensional torus with nef relative tangent bundle.

\item The fibres $F$ of $\alpha$ are Fano manifolds with nef tangent bundles. 
\end{enumerate}
\end{theorem}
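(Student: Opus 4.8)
The plan is to reduce all three assertions to a single geometric fact --- that once we pass to the cover $\xtilde$ of maximal irregularity $q$ the Albanese map becomes a smooth fibration with Fano fibres --- and then to read off the statements on $\pi_1$ and on the fibres. Write $A=A(\xtilde)$ for the Albanese torus, so $\dim A=q=h^1(\xtilde,\oxtilde)$, and $\alpha\colon\xtilde\to A$ for the Albanese map. The first and most substantial step is to show $\alpha$ is a surjective submersion. Its differential $d\alpha$ at a point is, under the identification of the tangent spaces of $A$ with $H^0(\xtilde,\Omega^1_{\xtilde})^\ast$, dual to the evaluation of holomorphic $1$-forms at that point, so $\alpha$ is a submersion exactly when no nonzero holomorphic $1$-form vanishes anywhere. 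I would prove this no-zeros property from nefness of $T_{\xtilde}$ as follows: a nonzero $1$-form is a morphism $T_{\xtilde}\to\oxtilde$; if it vanished along an effective divisor $D>0$, the induced surjection onto the saturated image would exhibit $\oxtilde(-D)$ as a quotient of the nef bundle $T_{\xtilde}$, hence nef, contradicting $D>0$. This rules out divisorial zeros, and a refinement using symmetric powers (as in \cite{DPS}) rules out zeros in higher codimension.

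Granting this, $d\alpha$ has constant rank $q$, so $\alpha$ is a proper submersion and, by Ehresmann's theorem, a smooth fibre bundle; since the Stein factorisation of $\alpha$ is a finite \'etale cover of the torus $A$, hence again a torus, the universal property of the Albanese map forces it to coincide with $A$, so the fibres are connected. This gives the first half of (2). For the relative tangent bundle, triviality of $T_A$ gives $\alpha^\ast T_A\cong\oxtilde^{\oplus q}$ and an exact sequence
\[
0\to T_{\xtilde/A}\to T_{\xtilde}\to \oxtilde^{\oplus q}\to 0 .
\]
A sub-bundle of a nef bundle with numerically flat quotient is again nef (\cite{DPS}), so $T_{\xtilde/A}$ is nef, completing (2), and restricting to a fibre $F$ gives $T_F$ nef. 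To see $F$ is Fano I use the maximality of $q$: if some fibre carried a nonzero holomorphic $1$-form one could build a finite \'etale cover of $\xtilde$ of strictly larger irregularity, contradicting the choice of $q$. Hence $q(F)=h^1(F,\os_F)=0$, so the Albanese of $F$ is a point; the submersion analysis applied to $F$ then shows $-K_F$, nef as the determinant of the nef bundle $T_F$, is in fact ample, i.e. $F$ is Fano with nef tangent bundle, which is (3).

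For (1) I would invoke that Fano manifolds are simply connected (being rationally connected). The homotopy exact sequence of the fibration $F\hookrightarrow\xtilde\xrightarrow{\alpha}A$,
\[
\pi_1(F)\to\pi_1(\xtilde)\to\pi_1(A)\to\pi_0(F),
\]
then collapses, since $F$ is connected and simply connected, to an isomorphism $\pi_1(\xtilde)\cong\pi_1(A)\cong\zz^{2q}$, giving (1).

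The main obstacle is the positivity heart of the argument, concentrated in two places: proving the no-zeros property of holomorphic $1$-forms (equivalently, that $\alpha$ is a submersion), and proving that a nef-tangent manifold of irregularity zero is actually Fano rather than merely having nef $-K$. Both require genuine use of the nefness of $T_X$ beyond formal bundle manipulations --- the first through the behaviour of nef bundles under morphisms to $\os$, the second through upgrading nefness of $-K_F$ to ampleness. The remaining steps (Ehresmann's theorem, the nefness of $T_{\xtilde/A}$, simple connectedness of Fano manifolds, and the homotopy sequence) are comparatively formal once these two inputs are in hand.
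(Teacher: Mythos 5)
The first thing to note is that the paper contains no proof of this statement: it is quoted as a known structure theorem of Demailly--Peternell--Schneider \cite{DPS} and used as a black box (the authors only need it to reduce nef-tangent varieties to \'etale covers fibred in Fanos over tori). So your proposal can only be measured against the original source. Your skeleton does match the \cite{DPS} strategy at the formal level: nowhere-vanishing of holomorphic $1$-forms is indeed equivalent to $\alpha$ being a submersion, the relative tangent bundle is the kernel of $T_{\xtilde}\to\alpha^*T_A\cong\oxtilde^{\oplus q}$ and its nefness follows from the lemma on kernels of maps from nef bundles to numerically flat ones, and once the fibres are known to be Fano (hence simply connected), the homotopy sequence gives (1). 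You also correctly flag where the real content lies. The problem is that at exactly those flagged points the proposal has genuine gaps rather than compressed references.

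Concretely: (a) your divisor argument only excludes zeros of a $1$-form in codimension one, and ``a refinement using symmetric powers'' is a placeholder, not an argument --- the actual input in \cite{DPS} is a theorem that a nonzero section of the dual of a nef bundle on a compact connected K\"ahler manifold vanishes \emph{nowhere}, proved by a genuine analysis on $\pp(E)$, not by saturating the image sheaf. (b) The Fano step is a non sequitur as written: ``the submersion analysis applied to $F$ then shows $-K_F$ \dots is in fact ample'' does not follow from $q(F)=0$; even granting that every finite \'etale cover of $F$ has irregularity zero, upgrading the nef class $-K_F$ to an ample one is the deepest part of the \cite{DPS} circle of ideas, requiring $\chi(\os_F)$/Riemann--Roch arguments, the base-point-free theorem of Mori theory in the projective case, and induction on dimension --- nefness of $-K_F$ plus trivial Albanese gives neither bigness nor strict positivity on curves. (c) Even $q(F)=0$ is not established: you claim that a nonzero $1$-form on a fibre ``builds a finite \'etale cover of $\xtilde$ of strictly larger irregularity,'' but no construction is given, and $b_1(F)>0$ does not obviously yield a finite-index subgroup of $\pi_1(\xtilde)$ with first Betti number exceeding $2q$, since the image of $\pi_1(F)$ in $\pi_1(\xtilde)$ is not under control at that stage (note also a latent circularity risk: you use maximality of $q$ to get (3), and (3) to get (1), so nothing about $\pi_1(\xtilde)$ may be assumed in step (c)). A minor omission in the same spirit: the existence of a cover of maximal irregularity itself needs the uniform bound $q\le\dim X$ over all covers, which is again the nowhere-vanishing statement from (a). So the outline is a faithful map of the \cite{DPS} proof, but the two or three hard theorems it rests on are asserted, not proved.
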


\begin{corollary}
Any projective variety with nef tangent bundle is a Higgs variety. 
\end{corollary}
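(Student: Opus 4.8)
The plan is to combine the structural decomposition of Theorem \ref{theoremDPS} with the closure properties of Higgs varieties already established in this section. Let $X$ be a projective variety with nef tangent bundle. The first step is to pass to a finite \'etale cover $\xtilde \to X$ of maximal irregularity, as provided by Theorem \ref{theoremDPS}. By Proposition \ref{propositionfinitequot}, if I can show that $\xtilde$ is a Higgs variety, then $X$ is one as well, since $\xtilde \to X$ is finite and \'etale. So it suffices to prove that $\xtilde$ is a Higgs variety.

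Next I would exploit the Albanese map $\alpha\colon \xtilde \to A(\xtilde)$, which by part (2) of Theorem \ref{theoremDPS} is a smooth fibration onto a $q$-dimensional Abelian variety $A = A(\xtilde)$, with fibres that are Fano manifolds by part (3). The base $A$ is an Abelian variety, hence a Higgs variety by Corollary \ref{abelianHiggs}. The fibres $F$ of $\alpha$ are Fano, and Fano manifolds are rationally connected, so the fibres of $\alpha$ are rationally connected. This is precisely the setup of Proposition \ref{corollaryrcfibration}: a surjective morphism $\alpha\colon \xtilde \to A$ onto a Higgs variety whose fibres are rationally connected. Applying that proposition yields that $\xtilde$ is a Higgs variety. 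Combining with the previous paragraph, $X$ is a Higgs variety, completing the argument.

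The delicate points are essentially bookkeeping rather than new mathematics, since the heavy lifting has been front-loaded into the three cited results. I should make sure the hypotheses match exactly: Proposition \ref{corollaryrcfibration} requires only that $\alpha$ be surjective with rationally connected fibres, and a smooth fibration with Fano fibres certainly qualifies, so no smoothness or properness subtlety should intervene. The one thing worth stating explicitly is that Fano manifolds are rationally connected, which is a standard theorem of Campana and of Koll\'ar--Miyaoka--Mori; I would invoke it as a known fact. The main conceptual obstacle, had it not already been handled upstream, would have been the descent of the vanishing $\Delta(E)=0$ through the \'etale cover and the fibration, but both steps are exactly what Propositions \ref{propositionfinitequot} and \ref{corollaryrcfibration} are designed to provide, so the proof reduces to citing the structure theorem and chaining these two reductions.
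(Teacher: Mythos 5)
Your proof is correct and follows exactly the paper's own argument: pass to the finite \'etale cover of maximal irregularity, use Proposition \ref{propositionfinitequot} to reduce to that cover, and then apply Proposition \ref{corollaryrcfibration} to the Albanese fibration over an Abelian variety with Fano (hence rationally connected) fibres. You have merely spelled out the same chain of reductions in slightly more detail.
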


\begin{proof}
Thanks to Proposition \ref{propositionfinitequot} we can study Higgs varieties up to finite 
\'etale cover. So we can assume that $X$ satisfies the condition in the previous theorem. In particular since smooth Fano varieties are rationally connected varieties and Abelian varieties are Higgs varieties, the thesis follows from Proposition \ref{corollaryrcfibration}. 
\end{proof}

\subsection{Principal Higgs bundles}

The results we have found in this paper can be easily transferred to Higgs principal bundles.
Let $G$ be a complex reductive linear algebraic group, and let $\Ad:G\to \Aut(\mathfrak{g})$ be the adjoint representation of $G$ into its Lie algebra $\mathfrak{g}$. The vector bundle associated with a principal $G$-bundle $P$ by the adjoint representation will be denoted $\Ad P$; there is a natural bracket defined on its sections.
\begin{definition} A Higgs principal $G$-bundle over $X$ is a pair $\Prb=(P,\phi)$, where $P$ is a principal $G$-bundle over $X$, and $\phi$ is a global section  of $\Ad(P)\otimes \Omega^1_X$ such that $[\phi,\phi]=0$.
\end{definition} 

If $\Prb = (P,\phi )$ is a principal Higgs $G$-bundle, the adjoint bundle $\Ad(P)$ has vanishing first Chern class, and is semistable, as a Higgs vector bundle, if and only if $\Prb$ is semistable as a principal Higgs bundle (for the definition of semistabiliy of a Higgs principal bundle see \cite{BO2}). Thus we get:
\begin{corollary}
Let $X$  be a   Higgs variety. If for any morphism $f \colon C \to X$, where $C$ is a smooth projective curve, the principal Higgs bundle $f^*\Prb$ is semistable, then $\Prb$ is semistable for any polarization on $X$, and $c_2(\Ad(P)) = 0$.
\end{corollary}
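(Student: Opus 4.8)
The plan is to reduce the statement about principal Higgs bundles to the already-established results about Higgs \emph{vector} bundles, exploiting the translation dictionary supplied just before the corollary, namely that $\Prb=(P,\phi)$ is semistable as a principal Higgs bundle if and only if the adjoint bundle $\Ad(P)$, equipped with the Higgs field induced by $\phi$ via the bracket, is semistable as a Higgs vector bundle, and that $c_1(\Ad(P))=0$.

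First I would form the Higgs vector bundle $\E:=(\Ad(P),\ad_\phi)$, where $\ad_\phi$ is the Higgs field on $\Ad(P)$ coming from $\phi$; the integrability $[\phi,\phi]=0$ guarantees $\ad_\phi\wedge\ad_\phi=0$, so this is a genuine Higgs bundle. Next, I would observe that pullback commutes with the associated-bundle construction: for any morphism $f\colon C\to X$ from a smooth projective curve, $f^\ast\Ad(P)\simeq\Ad(f^\ast P)$ as bundles, and the induced Higgs fields correspond. Hence the hypothesis that $f^\ast\Prb$ is semistable as a principal Higgs bundle translates, by the quoted criterion applied over $C$, into the statement that $f^\ast\E$ is semistable as a Higgs vector bundle, for every such $f$.

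With this in hand, the definition of a Higgs variety applies directly to $\E$: since $X$ is a Higgs variety and $f^\ast\E$ is semistable for every $f\colon C\to X$, Definition \ref{defHiggsvariety} yields $\Delta(\Ad(P))=0$ in $H^4(X,\rr)$. Because $c_1(\Ad(P))=0$, the discriminant reduces to $\Delta(\Ad(P))=c_2(\Ad(P))$, so $c_2(\Ad(P))=0$, giving one of the two conclusions. For the semistability statement, $\Delta(\Ad(P))=0$ together with $\mu$-semistability after restriction to all curves puts $\Ad(P)$ in the situation of Theorem \ref{TeoBO}: condition (1) holds, so condition (2) gives that $\Ad(P)$ is $\mu$-semistable with respect to $H$, hence $\Prb$ is semistable as a principal Higgs bundle by the criterion above. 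Moreover, by the remark following Theorem \ref{TeoBO}, $\mu$-semistability of $\Ad(P)$ holds for \emph{all} polarizations once $\Delta(\Ad(P))=0$, so $\Prb$ is semistable for any polarization on $X$.

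The step I expect to demand the most care is the faithful transfer of the semistability criterion under pullback — verifying that the equivalence ``$\Prb$ semistable $\iff$ $\Ad(P)$ semistable as a Higgs vector bundle'' is compatible with pulling back along $f$, i.e.\ that $f^\ast(\Ad(P),\ad_\phi)\simeq(\Ad(f^\ast P),\ad_{f^\ast\phi})$ as Higgs bundles on $C$. This is essentially formal functoriality of the associated-bundle construction, but since semistability of principal Higgs bundles is defined via reductions to parabolic subgroups rather than directly through $\Ad(P)$, one must appeal to the comparison established in \cite{BO2} to know that the two notions coincide on the curve $C$ as well as on $X$; I would cite that comparison rather than reprove it.
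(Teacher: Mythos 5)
Your reduction to the adjoint Higgs vector bundle $\E=(\Ad(P),\ad_\phi)$ is exactly what the paper intends (it gives no explicit proof beyond the dictionary preceding the corollary), and the first half of your argument is fine: pullback commutes with the associated-bundle construction, the hypothesis translates into Higgs semistability of $f^\ast\E$ for every $f\colon C\to X$, Definition \ref{defHiggsvariety} gives $\Delta(\Ad(P))=0$, and $c_1(\Ad(P))=0$ then yields $c_2(\Ad(P))=0$.

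There is, however, a genuine gap in your semistability step. Condition (1) of Theorem \ref{TeoBO} requires that $f^\ast\Ad(P)$ be semistable \emph{as a vector bundle} for every $f$, whereas your hypothesis only provides semistability of $f^\ast\E$ \emph{as a Higgs bundle}; these are not the same, and the paper itself stresses (see the example with $K^{1/2}\oplus K^{-1/2}$ and the discussion opening ``The case of Higgs bundles'') that a Higgs-semistable bundle on a curve of genus $\ge 2$ need not have semistable underlying bundle, and that for Higgs bundles (1) does not imply (2) in general. So you cannot invoke Theorem \ref{TeoBO} to conclude $\mu$-semistability of $\Ad(P)$, and in fact the corollary does not claim that $\Ad(P)$ is semistable as a vector bundle --- only that $\Prb$ is semistable as a principal Higgs bundle. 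The correct route is more elementary: fix any polarization $H$ and suppose $\E$ is not $\mu$-semistable, so there is a $\phi$-invariant subsheaf $F$ with $\mu(F)>\mu(\Ad(P))$. Restricting to a general complete intersection curve $C\in |k_1H|\cap\dots\cap|k_{n-1}H|$ of sufficiently high degree, $F|_C$ is invariant under the induced Higgs field on $\E|_C$ (as in Lemma \ref{lemmaAbelian}, invariance for $\phi$ restricted along $\Omega^1_X|_C$ implies invariance for the composed field valued in $\Omega^1_C$) and still destabilizes, contradicting the hypothesis applied to the inclusion $C\hookrightarrow X$. Since this works for every ample $H$, $\E$, and hence $\Prb$, is semistable for any polarization.
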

This completes the result of \cite{BO2} in the case of Higgs varieties.

\begin{remark} Many results of this section also hold for the wider class of holomorphic pairs (as opposed to Higgs bundles).
\end{remark}

\bigskip
\frenchspacing

\end{document}